\theoremstyle{plain}
\theoremstyle{plain}
\newtheorem{theorem}{Theorem}[section]
\newtheorem{lemma}[theorem]{Lemma}
\newtheorem{corollary}[theorem]{Corollary}
\theoremstyle{definition}
\newtheorem{defin}[theorem]{Definition}
\newtheorem{remark}[theorem]{Remark}
\theoremstyle{remark}
\numberwithin{equation}{section}
\DeclareMathOperator{\sgn}{sgn}
\newcommand{\car}[1]{\raise1pt\hbox{$\chi$}_{#1}}
\newcommand{\DM }{\mathcal{DM}^\infty }
\begin{document}
	\title[$1-$laplacian problems with singular first order terms]{Existence of solutions for $1-$laplacian problems with singular first order terms}

	\author[F. Balducci]{Francesco Balducci}
	
	\address{Francesco Balducci
		\hfill \break\indent
		Dipartimento di Scienze di Base e Applicate per l' Ingegneria, Sapienza Universit\`a di Roma
		\hfill \break\indent
		Via Scarpa 16, 00161 Roma, Italy}
	\email{\tt francesco.balducci@uniroma1.it}

	\keywords{$1-$laplacian, natural growth gradient terms, regularizing effects, $L^N$ data, singular problems} \subjclass[2020]{35J25, 35J60,  35J75}

	\begin{abstract}
		We prove existence of solutions to the following problem
		\begin{equation*}
			\begin{cases}
				-\Delta_1 u +g(u)|Du|=h(u)f & \text{in $\Omega$,}\\
				u=0 & \text{on $\partial\Omega$,}
			\end{cases}
		\end{equation*}
		where $\Omega \subset \mathbb{R}^N$, with $N\ge2$, is an open and bounded set with Lipschitz boundary, $g$ is a continuous and positive function which possibly blows up at the origin and bounded at infinity and $h$ is a continuous and nonnegative function bounded at infinity (possibly blowing up at the origin) and finally $0 \le f \in L^N(\Omega)$.\\As a by-product, this paper extends the results found where $g$ is a continuous and bounded function. \\We investigate the interplay between $g$ and $h$ in order to have existence of solutions.
	\end{abstract}
	
	\maketitle
	\tableofcontents
	
	\section{Introduction}
	In this paper we study the following nonlinear elliptic  Dirichlet \black problem which is
	\begin{equation}\label{the prob}
		\begin{cases}
			-\Delta_{1}u + g(u)|Du|=h(u)f & \text{in $\Omega$,}\\
			
			u=0 & \text{on $\partial \Omega$,}
		\end{cases}
	\end{equation}
	where $\Omega \subset \mathbb{R}^N$ with $N \ge 2$ is an open bounded set with Lipschitz boundary, $\Delta_1 u:=\operatorname{div}\left(\frac{Du}{|Du|}\right)$ is the $1-$laplacian operator and $g$ is a positive and continuous function on $[0, \infty)$ which possibly blows up at the origin and is bounded at infinity. Finally, $0 \le f \in L^N(\Omega)$ and $h$ is a continuous and nonnegative function on $[0, \infty)$ that is allowed to blow up at the origin and bounded at infinity. We highlight that the case of continuous, bounded and nonmonotone functions $g,h$ is covered by the above assumptions and by \cite{BOP}. 
	\medskip
	\\This type of problems has also been studied as a model for the level set formulation  as explain \black in \cite{HI} for the inverse mean curvature flow, as done initially in \cite{MS}, in order to prove the well-known Penrose inequality for a single black hole.
	\medskip
	\\Our aim is proving the existence of finite energy solutions of \eqref{the prob}; i.e. a function $u \in BV(\Omega)$, which is the space where such problems are naturally built-in in case of smooth nonlinear terms and data.
	\medskip
	\\  Problems involving natural growth gradient terms are widely studied in the literature in the case of $p$-growth, namely \black \begin{equation}\label{prob p}
		\begin{cases}
			-\Delta_p u+g(u)|\nabla u|^p =h(u)f & \text{in $\Omega$,}\\
			u=0 & \text{on $\partial \Omega$.}
		\end{cases}
	\end{equation} In this case, despite the irregularity of the datum \( f \in L^1(\Omega) \), the existence of a finite-energy solution in \( W^{1,p}_0(\Omega) \) is ensured by the presence of the regularizing gradient term, as shown in \cite{BG, BMP}. \black
	\medskip
	\\We want to explore the interplay between the first order and possibly singular absorption term involving $g$ and the zero order and possibly singular nonlinearity $h$ in presence of a datum $f \in L^N(\Omega)$ and the game between the explosion range of $g$ and $h$ at the origin. The problem \eqref{prob p}  was already \black  studied in the case of the laplacian operator (i.e. $p=2$) in \cite{ACLMOP} with $g(s) \sim s^{-\theta}$ with $\theta >0$ near the origin and $h\equiv 1$ where the condition for the existence result of finite energy solutions is $\theta <2$; later the  result \black was extended to $p-$laplacian  case provided $\theta<p$ in \cite{WW}. \black  
	\medskip
	\\Let us briefly discuss the literature concerning the existence of solution for the $1-$laplacian operator,  one usually solves the corresponding problem with the operator $p-$laplacian, finds uniform estimates in $p$, and then lets $p$ tend to $1$. 
	\\In order to give sense to the $1-$laplacian operator in \cite{ABCM} the authors used the Anzellotti theory of pairings $(z, Du)$ of $L^\infty-$divergence$-$measure vector fields $z$ and the gradient of a $BV$ function $u$; the vector field $z \in \DM(\Omega)$ (see Section 2 for more details) is such that $\|z\|_{L^\infty(\Omega)^N}\le1$ and $(z,Du)=|Du|$, in this way $z$ plays the role of the singular quotient $\frac{Du}{|Du|}$. Furthermore, the datum on the boundary is not attained in the classical sense, as it involves the trace of the vector field
	$z$ on the boundary $\partial \Omega$ (for more details, see Remark \ref{rem post def}). \black
	\medskip \\If $g\equiv 0$ and $h \equiv 1$, it is well known that $u \equiv 0$ if $\|f\|_{L^N(\Omega)} < \mathcal{S}^{-1}_1$,  where  $\mathcal{S}_1$ is the constant defined in Theorem  \ref{embedding} (for more details see \cite{CT}); instead in \cite{MST2} the authors proved that $u \equiv \infty$ in a subset of the domain with positive Lebesgue measure if $\|f\|_{L^N(\Omega)}>\mathcal{S}^{-1}_1$. 
	\\The case with $g \equiv 0$ and $h(s)\sim s^{-\gamma}$ with $\gamma>0$ near the origin is discussed in \cite{DCGOP}. The solution of this problem exists when $\|f\|_{L^N(\Omega)} <\left( \mathcal{S}_1h(\infty) \right)^{-1}$ \black  and in particular it is shown that $u>0$ a.e. if $h(0)=\infty$.
	\\The case with $h \equiv 1$ and $g$ is a nonnegative and continuous function is analyzed in \cite{LTS}. The presence of a gradient term introduced some regularizing effects such as there is no jump part in $Du$ and no smallness condition for the norm of $\|f\|_{L^N(\Omega)}$ is needed in order to obtain existence of solutions. Finally, the data is pointwise  assumed \black $\mathcal{H}^{N-1}$ on the boundary.
	\medskip
	\\In conclusion, in \cite{BOP} we can see all the regularizing effects described previously due to the presence of the functions $g$ and $h$, we highlight that there exists a solution in $BV(\Omega)$ because $\gamma \le 1$ and the datum is pointwise assumed on the boundary because $g$ is a continuous, positive and bounded function.
	\medskip
	\\Our aim is generalizing the condition on $\theta$ when $g(s) \sim s^{-\theta}$ near the origin with the presence of a singular zero$-$order term on the right-side in the context of $1-$laplacian operator; the conditions found on $\theta$ and $\gamma$ generalize the ones found in \cite{ACLMOP, WW}, in fact we assume that $0\le \theta < 1$ to gain the integrability of the function $g$ near the origin and $\theta + \gamma \le 1$, moreover there is no smallness assumption on $\|f\|_{L^N(\Omega)}$ for the existence of a finite energy solution $u \in BV(\Omega)$   (for further details on the significance of these hypotheses, refer Remark \ref{oss assumptions} ). \black
	\medskip
	\\Let us summarize the contents present of this paper. In Section $2$ we introduce the notation and we give preparatory tools. In Section $3$ we   present \black the main assumptions and results. Finally, in Section $4$, we discuss the case of a nonnegative $f \in L^N(\Omega)$.

	\section{Notation and preparatory tools}
	In this paper, we denote by $\Omega \subset \mathbb{R}^{N}$, with $N \ge 2$, an open bounded set with Lipschitz boundary. We indicate with $|E|$ the $N-$dimensional Lebesgue measure of a set $E$,  whereas \black $\mathcal{H}^{N-1}(\partial E)$ means the $(N-1)-$dimensional Hausdorff measure.
	\\$\mathcal{M}(\Omega)$ is the space of Radon measures with finite total variation over $\Omega$. $\mathcal{M}_{\rm{loc}}(\Omega)$ is its local counterpart, i.e. the space of Radon measures with locally finite total variation on $\Omega$.
	\medskip
	\\Let us introduce the truncation functions, fixed $k>0$: 
	\begin{equation}\label{trc fun}
		T_k(s):=\begin{cases}
			-k & \text{if $s < -k$,} \\
			s & \text{if $|s|\le k$,} \\
			k & \text{if $s>k$, }
		\end{cases}
	\end{equation}
	and $G_k:\mathbb{R} \to \mathbb{R}$ is \begin{equation}\label{def G k}
		G_k(s):=s-T_k(s).
	\end{equation}
	Furthermore, for a fixed $\delta>0$, we define another type of truncation function $V_\delta : [0, \infty) \to [0,1]$
	\begin{equation}\label{def v d}
		V_\delta(s):=\begin{cases}
			1 & 0\le s \le \delta, \\
			\frac{2\delta -s}{\delta} & \delta<s < 2 \delta, \\
			0 & s \ge 2\delta.
		\end{cases}
	\end{equation}
	\\In the entire paper,  we use the following notation
	$$\int_{\Omega}f := \int_{\Omega} f(x) \, \ensuremath{\mathrm d}x.$$
	\\Moreover, we denote by $C^0_b(\Omega)$ the space of continuous and bounded real functions.
	\\Finally, by $C$ we mean several positive constants whose value change from line to line or on the same line whose value does not depend on the indexes of introduced sequence, but it only depends on the data. In particular, we will not relabel an extracted subsequence. 
	\subsection{Essential properties of $BV$ functions} 
	The set of bounded variation functions is 
	$$BV(\Omega):= \{u \in L^1(\Omega) : Du \in \mathcal{M}(\Omega)^N\},$$   where $Du$ is the distributional gradient. \black
	$BV(\Omega)$ is a Banach space endowed with the norm 
	$$\||u|\|_{BV(\Omega)}:=\|u\|_{L^1(\Omega)}+ \int_\Omega|Du|,$$ where $|Du|$ is the total variation of the vector measure $Du$. However in the paper we use the following equivalent norm \black
	$$\|u\|_{BV(\Omega)}:=\int_{\partial \Omega} |u| \, \ensuremath{\mathrm d}\mathcal{H}^{N-1} + \int_{\Omega} |Du|\, .$$ 
	\medskip
	\\Furthermore we recall the following embedding Theorem.
	\begin{theorem}\label{embedding}
		The embeddings $BV(\Omega) \hookrightarrow L^p(\Omega)$ are compact for every $1 \le p < \frac{N}{N-1}$. The embedding $BV(\Omega) \hookrightarrow L^{\frac{N}{N-1}}(\Omega)$ is continuous and $\mathcal{S}_1$ is the best constant of this embedding,  i.e. \black for every $u \in BV(\Omega)$, it holds
		\begin{equation}\label{f embedding}
			\|u\|_{L^{\frac{N}{N-1}}(\Omega)} \le \mathcal{S}_1 \|u\|_{BV(\Omega)}.
		\end{equation}
	\end{theorem}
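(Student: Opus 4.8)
This is the classical Sobolev--Gagliardo--Nirenberg inequality together with the Rellich--Kondrachov compactness theorem in the $BV$ setting, and it is standard; I would organise the argument around extension by zero so as to keep exact track of the boundary term in the norm $\|u\|_{BV(\Omega)}=\int_{\partial\Omega}|u|\,\mathrm d\mathcal H^{N-1}+\int_\Omega|Du|$. Given $u\in BV(\Omega)$, the first step is to extend $u$ by $0$ outside $\Omega$, obtaining $\bar u\in BV(\mathbb{R}^N)$: since $\partial\Omega$ is Lipschitz, $u$ admits an $L^1(\partial\Omega,\mathcal H^{N-1})$ trace and the jump of $\bar u$ across $\partial\Omega$ contributes exactly $\int_{\partial\Omega}|u|\,\mathrm d\mathcal H^{N-1}$, so that $|D\bar u|(\mathbb{R}^N)=\|u\|_{BV(\Omega)}$ while $\|u\|_{L^{N/(N-1)}(\Omega)}=\|\bar u\|_{L^{N/(N-1)}(\mathbb{R}^N)}$. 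Thus it suffices to prove both the inequality and the compactness for compactly supported $BV$ functions on $\mathbb{R}^N$ and then transfer the conclusions to $\Omega$.

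For the sharp continuous embedding I would first prove $\|v\|_{L^{N/(N-1)}(\mathbb{R}^N)}\le\mathcal S_1\|\nabla v\|_{L^1(\mathbb{R}^N)}$ for $v\in C_c^\infty(\mathbb{R}^N)$, with $\mathcal S_1=(N\omega_N^{1/N})^{-1}$ ($\omega_N$ the Lebesgue measure of the unit ball): applying the coarea formula to the superlevel sets $\{|v|>t\}$ and the isoperimetric inequality $P(E)\ge N\omega_N^{1/N}|E|^{(N-1)/N}$ to each of them yields the bound, and characteristic functions of balls show $\mathcal S_1$ cannot be lowered. Then, since $C^\infty$ functions are dense in $BV(\mathbb{R}^N)$ in the strict topology (mollification), with the $L^{N/(N-1)}$ norm lower semicontinuous along such approximations and the total variations converging, the inequality passes to $\bar u$; combined with the first step this is exactly \eqref{f embedding}. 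Testing \eqref{f embedding} with the zero extension of $\chi_B$ for a small ball $B$ with $\overline B\subset\Omega$ shows the constant remains optimal for the domain norm $\|\cdot\|_{BV(\Omega)}$.

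For the compactness, let $(u_n)$ be bounded in $BV(\Omega)$; by the previous step it is bounded in $L^{N/(N-1)}(\Omega)$, hence in $L^1(\Omega)$. Working with the zero extensions $\bar u_n$, the translation estimate $\|\bar u_n(\cdot+y)-\bar u_n\|_{L^1(\mathbb{R}^N)}\le|y|\,|D\bar u_n|(\mathbb{R}^N)\le C|y|$ (proved first for smooth functions, then by density) together with the uniform support condition verifies the Fréchet--Kolmogorov criterion, so along a subsequence $u_n\to u$ in $L^1(\Omega)$, and $u\in BV(\Omega)$ by lower semicontinuity of the total variation. For $1\le p<\frac{N}{N-1}$ one interpolates
\begin{equation*}
\|u_n-u\|_{L^p(\Omega)}\le\|u_n-u\|_{L^1(\Omega)}^{\lambda}\,\|u_n-u\|_{L^{N/(N-1)}(\Omega)}^{1-\lambda},\qquad \tfrac1p=\lambda+\tfrac{(1-\lambda)(N-1)}{N},\quad\lambda\in(0,1],
\end{equation*}
and lets $n\to\infty$: the first factor tends to $0$, the second stays bounded, giving $u_n\to u$ in $L^p(\Omega)$ and hence the compact embedding for every such $p$.

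The only genuinely delicate points are, in the reduction step, the identification $|D\bar u|(\mathbb{R}^N)=\int_\Omega|Du|+\int_{\partial\Omega}|u|\,\mathrm d\mathcal H^{N-1}$ for the zero extension — where the Lipschitz regularity of $\partial\Omega$ and the description of the jump part of $\bar u$ on $\partial\Omega$ are essential — and the claim that the sharp whole-space constant is not decreased by passing to the domain norm. Once these are in place, the remaining ingredients (coarea plus isoperimetry, mollification, Fréchet--Kolmogorov, interpolation) are routine, so I do not expect any serious obstacle beyond this bookkeeping.
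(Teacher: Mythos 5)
Your proof is correct, and it is the standard argument: the paper itself offers no proof of Theorem \ref{embedding}, treating it as a classical fact about $BV$ functions on bounded Lipschitz domains (it is the Sobolev inequality and Rellich--Kondrachov compactness in $BV$, as in Ambrosio--Fusco--Pallara), so there is no competing argument to compare against. Your route --- zero extension so that $|D\bar u|(\mathbb{R}^N)$ equals the norm $\int_{\partial\Omega}|u|\,\mathrm d\mathcal H^{N-1}+\int_\Omega|Du|$, coarea plus isoperimetry for the sharp whole-space inequality with $\mathcal S_1=(N\omega_N^{1/N})^{-1}$, characteristic functions of balls compactly contained in $\Omega$ to show the constant is attained for the domain norm, Fr\'echet--Kolmogorov via the translation estimate for the $L^1$ compactness, and interpolation between $L^1$ and $L^{N/(N-1)}$ for $1\le p<\frac{N}{N-1}$ --- is exactly the textbook proof and handles the delicate points (the trace/jump identification for the zero extension, which is where the Lipschitz regularity of $\partial\Omega$ enters, and the optimality of the constant for the boundary-inclusive norm) correctly.
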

	Let us state the compactness result for $BV-$functions (\cite[Theorem 3.23]{AFP}).
	\begin{theorem}
		Consider a sequence of functions $u_n$ \black  uniformly bounded with respect to $n\in \mathbb{N}$ in $\|\cdot \|_{BV(\Omega)}$. Then there exists $u \in BV(\Omega)$ such that $u_n \to u$ strongly in $L^1(\Omega)$ and $Du_n \rightharpoonup Du$ *$-$weakly in $\mathcal{M}(\Omega)$.
	\end{theorem}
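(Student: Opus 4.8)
\emph{Overview.} The plan is to obtain the function $u$ as the limit of a (non‑relabelled) subsequence of $(u_n)$, combining the \emph{compact} embedding $BV(\Omega)\hookrightarrow L^1(\Omega)$ of Theorem \ref{embedding} with the weak‑$*$ sequential compactness of bounded sequences in $\mathcal{M}(\Omega)^N$, and then identifying the limit of the gradients via the distributional definition of $Du$.

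\emph{Step 1: reduction and extraction of an $L^1$‑limit.} First I would pass from the norm $\|\cdot\|_{BV(\Omega)}$ used in the paper to the classical one $\|u\|_{L^1(\Omega)}+\int_\Omega|Du|$, which is licit since the two are equivalent on $BV(\Omega)$ (here is where the Lipschitz regularity of $\partial\Omega$ enters, so that the boundary trace is controlled). Thus the hypothesis gives $M:=\sup_n\big(\|u_n\|_{L^1(\Omega)}+\int_\Omega|Du_n|\big)<\infty$. Since by Theorem \ref{embedding} the embedding $BV(\Omega)\hookrightarrow L^1(\Omega)$ is compact (the exponent $p=1$ lies in the admissible range $1\le p<\frac{N}{N-1}$), the bounded sequence $(u_n)$ admits a subsequence, still denoted $(u_n)$, and a function $u\in L^1(\Omega)$ with $u_n\to u$ strongly in $L^1(\Omega)$.

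\emph{Step 2: $u\in BV(\Omega)$ and convergence of the gradients.} The vector measures $Du_n$ satisfy $|Du_n|(\Omega)\le M$, so $(Du_n)$ is bounded in $\mathcal{M}(\Omega)^N\cong\big(C_0(\Omega)^N\big)^*$; by the Banach–Alaoglu theorem (together with the separability of $C_0(\Omega)$), up to a further extraction $Du_n\rightharpoonup\mu$ weakly‑$*$ for some $\mu\in\mathcal{M}(\Omega)^N$. To identify $\mu$, fix $\varphi\in C^\infty_c(\Omega)^N$ and combine the definition of the distributional gradient with the strong $L^1$ convergence of $(u_n)$:
\[
\int_\Omega\varphi\cdot \mathrm{d}\mu=\lim_n\int_\Omega\varphi\cdot \mathrm{d}Du_n=-\lim_n\int_\Omega u_n\,\operatorname{div}\varphi=-\int_\Omega u\,\operatorname{div}\varphi,
\]
the last step being justified because $\operatorname{div}\varphi\in L^\infty(\Omega)$ has compact support. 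Hence $Du=\mu\in\mathcal{M}(\Omega)^N$, so $u\in BV(\Omega)$; equivalently, $u\in BV(\Omega)$ follows directly from the $L^1$‑lower semicontinuity of the total variation, $\int_\Omega|Du|\le\liminf_n\int_\Omega|Du_n|\le M$. Since the distributional limit is unique, the whole extracted subsequence satisfies $Du_n\rightharpoonup Du$ $*$‑weakly in $\mathcal{M}(\Omega)$, which is the assertion.

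\emph{Main difficulty.} There is essentially no deep obstacle once the compact embedding of Theorem \ref{embedding} is granted: that compactness is the analytic heart of the statement and it is quoted. The only point requiring (routine) care is the identification $\mu=Du$, i.e.\ passing to the limit in the integration‑by‑parts formula against test vector fields, which works precisely because $u_n\to u$ in $L^1(\Omega)$ while the test data $\operatorname{div}\varphi$ are bounded with compact support. Finally, the standing convention that extracted subsequences are not relabelled is what makes the bare existence statement ``there exists $u$'' consistent with this subsequential construction.
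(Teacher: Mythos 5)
Your proof is correct: the paper does not prove this statement itself (it is quoted from \cite[Theorem 3.23]{AFP}), and your argument — compact embedding from Theorem \ref{embedding} to extract the $L^1$-limit, Banach--Alaoglu for the gradients, identification of the limit measure by integrating by parts against test fields, and lower semicontinuity/uniqueness to conclude $u\in BV(\Omega)$ and $Du_n\rightharpoonup Du$ — is exactly the standard route the cited reference formalizes. You also correctly flag that the convergence holds only along a non-relabelled subsequence, consistent with the paper's stated convention.
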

	Now we state \black another important result for $BV-$functions (see for instance \cite[Proposition 3.6]{AFP}).
	\begin{lemma}\label{sci BV}
		Let consider a sequence $u_n \in BV(\Omega)$ such that $u_n \to u$ strongly in $L^1(\Omega)$ with $u \in BV(\Omega)$. Then \begin{equation*}
			\int_\Omega |Du| \varphi \le \liminf_{n \to \infty} \int_\Omega |Du_n|\varphi \quad \text{for all $0\le \varphi \in C^1_c(\Omega)$. \black}
		\end{equation*}  
	\end{lemma}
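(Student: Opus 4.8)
\emph{Proof idea.} The plan is to reduce the weighted lower semicontinuity to the standard dual (variational) characterization of the total variation; once that is in place, the inequality becomes a one-line consequence of the strong convergence $u_n\to u$ in $L^1(\Omega)$.

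First I would record the representation, valid for every $v\in BV(\Omega)$ and every $0\le\varphi\in C^1_c(\Omega)$,
\begin{equation*}
	\int_\Omega|Dv|\,\varphi=\sup\Bigl\{-\int_\Omega v\,\diver\psi\ :\ \psi\in C^1_c(\Omega)^N,\ |\psi(x)|\le\varphi(x)\ \text{for all }x\in\Omega\Bigr\}.
\end{equation*}
The inequality ``$\ge$'' is immediate from the polar decomposition $Dv=\sigma_v\,|Dv|$ with $|\sigma_v|=1$ $|Dv|$-a.e.: for an admissible $\psi$ one has
\begin{equation*}
	-\int_\Omega v\,\diver\psi=\int_\Omega\psi\cdot\sigma_v\,d|Dv|\le\int_\Omega|\psi|\,d|Dv|\le\int_\Omega\varphi\,d|Dv|.
\end{equation*}
For ``$\le$'' I would approximate the field $\varphi\,\sigma_v$ in $L^1(\Omega,|Dv|)^N$ by fields $\psi_j\in C^1_c(\Omega)^N$ with $|\psi_j|\le\varphi$ (mollify $\sigma_v$ so that its modulus stays $\le1$, then multiply by the compactly supported $\varphi$), using that smooth fields are dense in $L^1(\Omega,|Dv|)$ by a Lusin-type argument, and pass to the limit in $\int_\Omega\psi_j\cdot dDv\to\int_\Omega\varphi\,\sigma_v\cdot\sigma_v\,d|Dv|=\int_\Omega\varphi\,d|Dv|$. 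This representation is classical and could also be quoted directly from \cite{AFP}.

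With the representation in hand, fix $\psi\in C^1_c(\Omega)^N$ with $|\psi|\le\varphi$. Since $\diver\psi\in L^\infty(\Omega)$ and $u_n\to u$ in $L^1(\Omega)$, while $\int_\Omega\psi\cdot dDu_n\le\int_\Omega|\psi|\,d|Du_n|\le\int_\Omega\varphi\,d|Du_n|$ for every $n$, we obtain
\begin{equation*}
	-\int_\Omega u\,\diver\psi=-\lim_{n\to\infty}\int_\Omega u_n\,\diver\psi=\lim_{n\to\infty}\int_\Omega\psi\cdot dDu_n\le\liminf_{n\to\infty}\int_\Omega|Du_n|\,\varphi.
\end{equation*}
Taking the supremum over all such $\psi$ and applying the representation above to $v=u$ yields $\int_\Omega|Du|\,\varphi\le\liminf_{n\to\infty}\int_\Omega|Du_n|\,\varphi$, which is the claim; the hypothesis $u\in BV(\Omega)$ serves only to make the left-hand side meaningful.

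The one genuinely technical point is the ``$\le$'' half of the representation formula — exhibiting admissible smooth, compactly supported competitors that nearly realize $\int_\Omega\varphi\,d|Dv|$ — which rests on approximating the polar vector $\sigma_v$ in $L^1(\Omega,|Dv|)$ against the finite Radon measure $|Dv|$. The passage to the limit itself is entirely routine: no information on the sequence $u_n$ beyond strong $L^1$-convergence is used.
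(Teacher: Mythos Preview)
Your argument is correct: the dual representation of $\int_\Omega\varphi\,d|Dv|$ as a supremum over admissible fields $\psi\in C^1_c(\Omega)^N$ with $|\psi|\le\varphi$, followed by passing to the limit using only $u_n\to u$ in $L^1(\Omega)$ and then taking the supremum, is exactly the standard route. The one point you flag as ``genuinely technical'' --- producing smooth competitors $\psi_j$ with $|\psi_j|\le\varphi$ that nearly saturate the supremum --- is handled correctly in outline (Lusin against $|Dv|$, then smooth and renormalize, then multiply by $\varphi$), and is in any case classical.

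Note however that the paper does \emph{not} give its own proof of this lemma: it merely states the result and cites \cite[Proposition~3.6]{AFP}. So there is nothing to compare your argument against beyond the observation that what you have written is precisely the textbook proof one would find behind that citation (lower semicontinuity of the total variation on open sets under weak-$*$ convergence, tested against the nonnegative continuous weight $\varphi$).
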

	\medskip
	For a function $u \in L^1_{\rm{loc}}(\Omega)$, we denote with $L_u$ the set of its Lebesgue points, with $S_u=\Omega \setminus L_u$ and with $J_u$ the jump set. In particular if $u \in BV(\Omega)$ the set $S_u \setminus J_u$ is $\mathcal{H}^{N-1}-$negligible, hence $u$ is well defined $\mathcal{H}^{N-1}-$a.e. In this case $u$ can be identified with the precise representative $u^*$ which is $$u^*(x):=\begin{cases}
		\tilde{u}(x) & \text{if $x \in L_u$,}\\
		\frac{u^+(x) + u^-(x)}{2} & \text{if $x \in J_u$,}
	\end{cases}$$ where $\tilde{u}$ is the Lebesgue's representative of $u$, $u^+$ and $u^-$  are the approximate limits of $u$.
	\\When $D^j u =0$, it means that $\mathcal{H}^{N-1}(J_u)=0$ or, equivalently, that $Du= \tilde{D}u$ where $\tilde{D}u$ is the absolutely continuous part of $Du$ with respect to the Lebesgue measure. As a consequence, we will denote the precise representative with $u$ instead of $u^*$, without ambiguity, when we integrate against a measure absolutely continuous with respect to $\mathcal{H}^{N-1}$. 
	\medskip
	\\ Finally, let us remember an important property for $BV$- functions (for more details see \cite[Theorem 5.14.4]{Z}) which ensures that they are finite $\mathcal{H}^{N-1}$- a.e. $x \in \Omega$.
	\begin{lemma}
		Let $u \in BV(\Omega)$, then it holds
		\begin{equation}\label{BV finita}
			\lim_{r \to 0} \frac{1}{|B_r(x_0)|} \int_{B_r(x_0)}u(x)=u(x_0),
		\end{equation}
		for $\mathcal{H}^{N-1}$-a.e. $x_0 \in \Omega$, where $B_r(x_0):=\left\{x \in \Omega \, :\, |x-x_0|<r\right\}$.
	\end{lemma}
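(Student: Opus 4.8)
The statement to prove is the pointwise convergence property \eqref{BV finita} for a $BV$ function, valid $\mathcal H^{N-1}$-a.e. The plan is to reduce everything to the structure theory of $BV$ functions already recalled above, namely the decomposition $S_u = (S_u\setminus J_u)\cup J_u$ with $\mathcal H^{N-1}(S_u\setminus J_u)=0$, together with the defining property of Lebesgue points. First I would observe that at every Lebesgue point $x_0\in L_u$ the averages $\frac1{|B_r(x_0)|}\int_{B_r(x_0)} u$ converge to $\tilde u(x_0)$ by definition of approximate continuity: indeed $\frac1{|B_r|}\int_{B_r(x_0)}|u-\tilde u(x_0)|\to 0$ forces the averages themselves to converge to $\tilde u(x_0)$. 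So on $L_u$ there is nothing to prove. The only issue is the exceptional set $S_u=\Omega\setminus L_u$.

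Next I would split $S_u$. On the part $S_u\setminus J_u$ we have, by the cited structure theorem (\cite[Thm.~5.14.4]{Z} or \cite{AFP}), $\mathcal H^{N-1}(S_u\setminus J_u)=0$, so this contributes nothing to an $\mathcal H^{N-1}$-a.e.\ statement and can be discarded. It remains to handle $J_u$, the jump set. The key point is that $J_u$ is $(N-1)$-rectifiable with $\sigma$-finite $\mathcal H^{N-1}$-measure, and at each $x_0\in J_u$ the function $u$ has two one-sided approximate limits $u^+(x_0)\ne u^-(x_0)$ taken on the two half-balls determined by the measure-theoretic normal $\nu_u(x_0)$. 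Averaging over the full ball $B_r(x_0)$ mixes the two half-ball contributions; since each half-ball has volume $\tfrac12|B_r(x_0)|(1+o(1))$ and $u$ is approximately continuous from each side with value $u^\pm(x_0)$, the full average converges to $\tfrac{u^+(x_0)+u^-(x_0)}{2}=u^*(x_0)$. Thus, once $u$ is identified with its precise representative $u^*$, the limit in \eqref{BV finita} holds at every point of $J_u$ as well, and hence $\mathcal H^{N-1}$-a.e.\ on $\Omega$. This is exactly why the statement is phrased with the precise representative in mind (the remark preceding the lemma already fixes this identification).

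The main obstacle — and really the only non-formal step — is justifying the half-ball averaging at jump points: one needs the density estimate $|B_r(x_0)\cap \{\pm(x-x_0)\cdot\nu_u(x_0)>0\}| = \tfrac12|B_r(x_0)| + o(|B_r(x_0)|)$ and the one-sided approximate continuity $\frac1{|B_r(x_0)|}\int_{B_r(x_0)\cap\{\pm(x-x_0)\cdot\nu_u>0\}}|u-u^\pm(x_0)|\to 0$, both of which are part of the fine structure of $BV$ functions. Since the excerpt is allowed to cite \cite[Theorem 5.14.4]{Z} and \cite{AFP} for precisely this fact, in practice I would not reprove it: the cleanest route is to invoke that the precise representative $u^*$ is $\mathcal H^{N-1}$-approximately continuous off an $\mathcal H^{N-1}$-null set, and then note that $\mathcal H^{N-1}$-approximate continuity of $u$ at $x_0$ combined with the local $\frac N{N-1}$-integrability of $u$ (so that the averages are finite and the exceptional contribution is controlled by Hölder) yields \eqref{BV finita}. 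In short: decompose $\Omega = L_u \cup (S_u\setminus J_u) \cup J_u$; the first set is trivial, the second is $\mathcal H^{N-1}$-null, the third is handled by the two-sided trace structure; then conclude.
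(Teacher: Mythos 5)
Your argument is correct: at Lebesgue points the averages converge to $\tilde u(x_0)$ by definition, $S_u\setminus J_u$ is $\mathcal H^{N-1}$-null by the Federer--Vol'pert structure theorem, and at jump points the exact half-ball volumes together with the one-sided approximate limits give convergence to $\frac{u^+(x_0)+u^-(x_0)}{2}=u^*(x_0)$. The paper itself gives no proof, invoking \cite[Theorem 5.14.4]{Z} directly, and your sketch is exactly the standard fine-structure argument underlying that citation, so the two routes coincide.
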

	
	\black

	For more details on $BV-$functions see \cite[Chapter 3]{AFP}.
	\subsection{The Anzellotti-Chen-Frid theory}
	Let us present the $L^\infty-$divergence$-$measure vector fields theory discussed, for the first time, in \cite{A} and \cite{CF}. We introduce the space
	$$\DM(\Omega):=\{z \in L^\infty(\Omega)^N : \operatorname{div}z \in \mathcal{M}(\Omega)\},$$ and its local version $\DM_{\rm{loc}}(\Omega)$ which is the set of bounded vector fields with divergence in $\mathcal{M}_{\rm{loc}}(\Omega)$. First we remember that for $z \in \DM(\Omega)$ the distributional divergence $\operatorname{div}z$ is absolutely continuous with respect to $\mathcal{H}^{N-1}$. \medskip
	\\	In \cite{A} Anzellotti introduced the distribution $(z,Dv): C^1_c(\Omega) \to \mathbb{R}$ such that
	\begin{equation}\label{dist1}
		\langle(z,Dv),\varphi\rangle:=-\int_\Omega v^*\varphi\operatorname{div}z-\int_\Omega
		vz\cdot\nabla\varphi,
	\end{equation}
	where $z \in \DM(\Omega)$ and $v \in BV(\Omega)\cap C^0_b(\Omega)$. Subsequently, different authors extended the previous definition of pairing to vector fields in $z \in \DM(\Omega)$ and functions in $v \in BV(\Omega) \cap L^\infty(\Omega)$, since $v^* \in L^\infty(\Omega, \operatorname{div}z)$ (see for instance \cite{C, MST}). Furthermore formula \eqref{dist1} is well posed if $z \in \DM_{\rm{loc}}(\Omega)$ and $v \in BV_{\rm{loc}}(\Omega) \cap L^1_{\rm{loc}}(\Omega, \operatorname{div}z)$, as proven in \cite{DCGS}; the authors showed that
	$$|\langle (z, Dv), \varphi \rangle| \le \|\varphi\|_{L^\infty(A)} \|z\|_{L^\infty(A)^N} \int_A |Dv|\,,$$
	for all open sets $A \subset \subset \Omega$ and for all $\varphi \in C^1_c(A)$. Moreover, it holds  
	\begin{equation}\label{finitetotal1}
		\left| \int_B (z, Dv) \right|  \le  \int_B \left|(z, Dv)\right| \le  ||z||_{L^\infty(A)^N} \int_{B} |Dv|\,,
	\end{equation}
	for all Borel sets $B$ and for all open sets $A$ such that $B \subset A \subset \Omega$, which means that the measure $(z,Dv)$ is absolutely continuous with respect to $|Dv|$. 
	\medskip
	\\Further, under the same assumptions for $z$ and $v$ we indicate by $\lambda(z, Dv, x)$ the Radon-Nikod\'ym derivative of $(z,Dv)$ with respect to $|Dv|$, hence we can affirm that $$(z,Dv)=\lambda(z,Dv,x)|Dv| \quad \text{as measures in $\Omega$.}$$
	Let us highlight that, if $z \in \DM_{\rm{loc}}(\Omega)$ and $v \in BV_{\rm{loc}}(\Omega) \cap L^\infty(\Omega)$
	\begin{equation}\label{Leibniz}
		\operatorname{div}(vz)=(z,Dv)+v^* \operatorname{div}z \quad \text{as measures in $\Omega$,}
	\end{equation}
	where we underline that, as a consequence, $vz \in \DM_{\rm{loc}}(\Omega)$.
	\medskip
	\\In \cite{A} it is shown that every $z \in \DM(\Omega)$ has a weak \black trace on $\partial \Omega$ of its normal component denoted as $[z,\nu]$, where $\nu(x)$ is the outward normal unit vector defined for $\mathcal{H}^{N-1}-$a.e. $x \in \partial \Omega$. Also we recall that
	\begin{equation*}
		\|[z, \nu]\|_{L^\infty(\partial \Omega)} \le \|z\|_{L^\infty(\Omega)^N},
	\end{equation*}
	and if we take $v \in BV(\Omega)\cap L^\infty(\Omega)$, it holds  \begin{equation}\label{des3}
		v[z, \nu]=[vz, \nu] \quad \text{$\mathcal{H}^{N-1}-$a.e. on $\partial \Omega$,}
	\end{equation} as proved in \cite{C}.
	\medskip
	\\Now we are able to state the generalized Gauss-Green formula for $L^\infty-$divergence$-$measure vector fields which in the form we present is proved in \cite{DCGS}. \black
	\begin{lemma}
		Let $z \in \DM_{\rm{loc}}(\Omega)$ and $v \in BV(\Omega) \cap L^\infty(\Omega)$ such that $v^* \in L^1(\Omega, \operatorname{div} z)$. Then $vz \in \DM(\Omega)$ and the following formula holds:
		\begin{equation}\label{GG form}
			\int_{\Omega} v^* \, \ensuremath{\mathrm d} \operatorname{div} z + \int_{\Omega}(z,Dv) = \int_{\partial \Omega} [vz,\nu] \, \ensuremath{\mathrm d}\mathcal{H}^{N-1}.
		\end{equation}
	\end{lemma}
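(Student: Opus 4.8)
The plan is to deduce \eqref{GG form} from the Leibniz-type product rule \eqref{Leibniz} together with the elementary Gauss--Green identity for a single divergence-measure vector field, i.e.\ the ``$v\equiv 1$'' case.

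First I would check that $vz\in\DM(\Omega)$. Since $v\in L^\infty(\Omega)$ and $z\in L^\infty(\Omega)^N$ we have $vz\in L^\infty(\Omega)^N$, so the only point to verify is that $\operatorname{div}(vz)$ is a \emph{finite} measure on all of $\Omega$. By \eqref{Leibniz}, which applies because $z\in\DM_{\rm{loc}}(\Omega)$ and $v\in BV_{\rm{loc}}(\Omega)\cap L^\infty(\Omega)$,
\[
\operatorname{div}(vz)=(z,Dv)+v^*\operatorname{div}z\qquad\text{as measures in }\Omega .
\]
The first term is finite because $|(z,Dv)|(\Omega)\le\|z\|_{L^\infty(\Omega)^N}\int_\Omega|Dv|<\infty$, by \eqref{finitetotal1} and $v\in BV(\Omega)$; the second term is finite precisely because of the hypothesis $v^*\in L^1(\Omega,\operatorname{div}z)$, which gives $|v^*\operatorname{div}z|(\Omega)=\int_\Omega|v^*|\,\ensuremath{\mathrm d}|\operatorname{div}z|<\infty$. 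Hence $\operatorname{div}(vz)\in\mathcal M(\Omega)$, i.e.\ $vz\in\DM(\Omega)$.

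Next, since $vz\in\DM(\Omega)$ and $\partial\Omega$ is Lipschitz, $vz$ admits a weak normal trace $[vz,\nu]\in L^\infty(\partial\Omega)$, and the classical Gauss--Green identity for $\DM(\Omega)$ fields gives
\[
\int_\Omega \ensuremath{\mathrm d}\operatorname{div}(vz)=\int_{\partial\Omega}[vz,\nu]\,\ensuremath{\mathrm d}\mathcal H^{N-1};
\]
this is the statement of Anzellotti and Chen--Frid and is essentially the defining property of the weak normal trace, so there is no circularity with the formula under proof. Finally, integrating the product-rule identity over $\Omega$ — legitimate now that all three measures involved are finite — yields $\int_\Omega\ensuremath{\mathrm d}\operatorname{div}(vz)=\int_\Omega(z,Dv)+\int_\Omega v^*\,\ensuremath{\mathrm d}\operatorname{div}z$, and comparing with the previous display produces exactly \eqref{GG form}.

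The only genuinely delicate step is the first one: the integrability assumption $v^*\in L^1(\Omega,\operatorname{div}z)$ cannot be dropped, since $z$ lies only in $\DM_{\rm{loc}}(\Omega)$ and $\operatorname{div}z$ need not be finite near $\partial\Omega$; it is exactly what upgrades $vz$ from $\DM_{\rm{loc}}(\Omega)$ to $\DM(\Omega)$ and makes the boundary integral meaningful. A more self-contained alternative would start from the distributional definition \eqref{dist1}, test it against a sequence of cut-off functions $\varphi_n\uparrow 1$ exhausting $\Omega$, and pass to the limit using \eqref{finitetotal1} for the pairing term and dominated convergence (with dominating function $\|v\|_{L^\infty(\Omega)}|v^*|$, note $v^*\in L^1(\Omega,\operatorname{div}z)$) for $v^*\operatorname{div}z$, the limit of $-\int_\Omega vz\cdot\nabla\varphi_n$ reconstructing $\int_{\partial\Omega}[vz,\nu]\,\ensuremath{\mathrm d}\mathcal H^{N-1}$; I would keep the first argument as the main line since it is shorter and isolates the role of the hypothesis.
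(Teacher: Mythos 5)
Your argument is correct, and it is worth noting that the paper itself gives no proof of this lemma: it is stated with a citation to \cite{DCGS}, so there is no internal proof to compare against. Your route — first upgrading $vz$ from $\DM_{\rm{loc}}(\Omega)$ to $\DM(\Omega)$ via the Leibniz rule \eqref{Leibniz}, with $(z,Dv)$ of finite total variation by \eqref{finitetotal1} (the bound involves only the global norm $\|z\|_{L^\infty(\Omega)^N}$, so it passes to $B=\Omega$ by exhaustion even though $\operatorname{div}z$ is only locally finite) and $v^*\operatorname{div}z$ finite exactly by the hypothesis $v^*\in L^1(\Omega,\operatorname{div}z)$ — and then applying the elementary Gauss--Green identity $\operatorname{div}(vz)(\Omega)=\int_{\partial\Omega}[vz,\nu]\,\ensuremath{\mathrm d}\mathcal{H}^{N-1}$ to the field $vz\in\DM(\Omega)$, is the standard derivation and is essentially how the cited reference proceeds; there is no circularity, since that identity (the $v\equiv 1$ case, due to Anzellotti and Chen--Frid on Lipschitz domains) is exactly the defining property of the weak normal trace. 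You also correctly isolate the role of the integrability hypothesis, which is the only delicate point. The one cosmetic slip is in your alternative cut-off argument, where the dominating function should simply be $|v^*|$ (since $0\le\varphi_n\le 1$) rather than $\|v\|_{L^\infty(\Omega)}|v^*|$, but this does not affect the main line of proof.
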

	\medskip
	For our scope, we also need the following definition of pairing measures, as defined in \cite{MS}. Let us consider $\beta : \mathbb{R} \to \mathbb{R}$ a locally Lipschitz function and let $v \in BV_{\text{loc}}(\Omega)$. Let us define
	\begin{equation*}\label{der grat}
		\displaystyle \beta(v)^{\#} :=\begin{cases} \displaystyle \frac{1}{v^{+} -v^-}\int_{v^-}^{v^+} \beta(s) \ ds  & \text{if  $x\in J_v$,}\\
			\beta(v) & \text{otherwise.}
			\ \end{cases}
	\end{equation*}
	We emphasize that $\beta(v)^\#$ coincides with $\beta(v)^*$ on the jump set of the function $v$ if and only if $\beta(s)=s$. Let $z \in \DM_{\rm{loc}}(\Omega)$ and $v \in BV_{\rm{loc}}(\Omega)$ satisfy $\beta(v) \in BV_{\rm{loc}}(\Omega) \cap L^\infty_{\rm{loc}}(\Omega)$. We introduce the distribution $\left(z, D\beta(v)^\#\right): C^1_c(\Omega) \to \mathbb{R}$ such that
	\begin{equation}\label{mis con grat}
		\langle(z,D\beta(v)^{\#}),\varphi\rangle:=-\int_\Omega \beta(v)^{\#}\varphi\operatorname{div}z-\int_\Omega
		\beta(v)z\cdot\nabla\varphi.
	\end{equation}
	Such pairing constitutes a well-defined measure (refer to, for instance, \cite[Lemma 2.5]{MS}), which is absolutely continuous with respect to $\mathcal{H}^{N-1}$ and 
	\begin{equation}\label{dis mis con grat}
		\left|\int_B (z,D\beta(v)^{\#}) \right|\le \int_B |(z,D\beta(v)^{\#})| \le \|z\|_{L^\infty(A)^N} \int_B |D \beta(v)|,
	\end{equation}
	for all Borel sets $B$ and for all open sets $A$ such that $B \subset A \subset \Omega$.
	\medskip
	\\Let us finally state the chain rule formula as given in \cite[Theorem 3.99]{AFP}.
	\begin{lemma}\label{chain rule}
		Let $v \in BV(\Omega)$ and let $\Phi: \mathbb{R} \to \mathbb{R}$ be a Lipschitz function. Then $w=\Phi(v) \in BV(\Omega)$ and \begin{equation}\label{eq chain}
			Dw=\Phi'(v)^\# Dv.
		\end{equation}
		In particular, if $D^jv=0$, then \begin{equation}\label{eq chain w j}
			\tilde{D}w =\Phi'(v) \tilde{D}v.
		\end{equation}
	\end{lemma}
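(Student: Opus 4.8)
This is a classical result (indeed it is \cite[Theorem 3.99]{AFP}); the plan is to reduce the general Lipschitz case to the $C^1$ case by mollification, and then to compute the absolutely continuous, Cantor and jump parts of $Dw$ separately.

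I would first treat $\Phi\in C^1(\mathbb R)$ with $L:=\|\Phi'\|_{L^\infty(\mathbb R)}<\infty$. Taking mollifications $v_n\in C^\infty(\Omega)\cap BV(\Omega)$ converging to $v$ strictly in $BV(\Omega)$, one has $\nabla\Phi(v_n)=\Phi'(v_n)\nabla v_n$, so $\int_\Omega|\nabla\Phi(v_n)|\le L\int_\Omega|\nabla v_n|$ is bounded; since $|\Phi(s)-\Phi(t)|\le L|s-t|$ gives $\Phi(v_n)\to\Phi(v)$ in $L^1(\Omega)$ (note $\Phi(v)\in L^1(\Omega)$ because $\Omega$ is bounded), lower semicontinuity of the total variation yields $w=\Phi(v)\in BV(\Omega)$ and $|Dw|\le L\,|Dv|$ as measures; in particular $D^c w\ll|D^c v|$. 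I would then compute the three parts of $Dw$ in terms of the corresponding parts of $Dv=\nabla v\,\mathcal L^N+D^c v+(v^+-v^-)\nu_v\,\mathcal H^{N-1}\!\res J_v$, piece by piece: the absolutely continuous part from the pointwise chain rule for the approximate differential, $\nabla w=\Phi'(v)\nabla v$ a.e.; the jump part by noting that $J_w\subseteq J_v$ up to $\mathcal H^{N-1}$-null sets and, with the orientation $\nu_w=\nu_v$, $w^\pm=\Phi(v^\pm)$ $\mathcal H^{N-1}$-a.e.\ on $J_v$, so that on $J_v$ one obtains $(\Phi(v^+)-\Phi(v^-))\nu_v\,\mathcal H^{N-1}=\Phi'(v)^\#\,Dv$ by the very definition of $\Phi'(v)^\#$ on the jump set (this stays correct where $\Phi(v^+)=\Phi(v^-)$, both sides then vanishing).

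The remaining, and in my view hardest, point is the Cantor part, where one must show $D^c w=\Phi'(\tilde v)\,D^c v$. The natural route is the one-dimensional slicing of $BV$ functions --- for a.e.\ line $\ell$ parallel to a coordinate axis, $v|_\ell$ is $BV$ in one variable, the Cantor part of $Dv$ disintegrates over these lines, and for $BV$ functions of a single variable the chain rule on the diffuse part is considerably simpler --- combined with the auxiliary fact that values lying in an $\mathcal L^1$-null set are invisible to the diffuse part, i.e.\ $|D^c v|\bigl(\{x\notin S_v:\tilde v(x)\in E\}\bigr)=0$ whenever $\mathcal L^1(E)=0$, which follows from the coarea formula. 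This last fact guarantees that $\Phi$ is differentiable at $\tilde v(x)$ for $|D^c v|$-a.e.\ $x$, which is precisely what one needs to make sense of, and then identify, the Radon--Nikod\'ym density $\Phi'(\tilde v)$ (alternatively one can run a Vol'pert-type blow-up argument at such points).

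Finally, for a general Lipschitz $\Phi$ I would mollify, $\Phi_\varepsilon=\Phi*\rho_\varepsilon$, so that $\|\Phi_\varepsilon'\|_{L^\infty}\le L$, $\Phi_\varepsilon\to\Phi$ locally uniformly, and $\Phi_\varepsilon'\to\Phi'$ both $\mathcal L^1$-a.e.\ and in $L^1_{\mathrm{loc}}(\mathbb R)$ along a subsequence. Writing $D\Phi_\varepsilon(v)=\Phi_\varepsilon'(v)^\#\,Dv$ from the previous steps and passing to the limit $\varepsilon\to0$ part by part --- dominated convergence for the absolutely continuous part and, using the range-null-set fact above, for the Cantor part; locally uniform convergence of the difference quotients $\tfrac{\Phi_\varepsilon(v^+)-\Phi_\varepsilon(v^-)}{v^+-v^-}$ on $J_v$ --- gives $w=\Phi(v)\in BV(\Omega)$ and $Dw=\Phi'(v)^\#\,Dv$. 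The stated particular case is then immediate: if $D^j v=0$ then $\mathcal H^{N-1}(J_v)=0$, hence also $\mathcal H^{N-1}(J_w)=0$ and $Dv=\tilde Dv$, while on the complement of $J_v$ one has $\Phi'(v)^\#=\Phi'(v)$, so $\tilde Dw=\Phi'(v)\,\tilde Dv$.
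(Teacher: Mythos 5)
The paper does not prove this lemma at all: it is quoted verbatim as a known result, namely \cite[Theorem 3.99]{AFP} (Vol'pert's chain rule), so there is no in-paper argument to compare yours against. Your proposal is a faithful outline of the standard textbook proof of that theorem: reduction of the Lipschitz case to the smooth case by mollifying $\Phi$, the bound $|Dw|\le L\,|Dv|$ via strict approximation and lower semicontinuity, and the part-by-part identification of $Dw$ --- approximate differentiability for the absolutely continuous part, $w^{\pm}=\Phi(v^{\pm})$ and the difference quotient (which is exactly $\Phi'(v)^{\#}$ on $J_v$) for the jump part, and one-dimensional slicing plus the coarea-type fact that $|D^c v|\bigl(\{\tilde v\in E\}\bigr)=0$ for $\mathcal L^1$-null $E$ for the Cantor part. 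You correctly single out the Cantor part and the a.e.\ differentiability of $\Phi$ along the range of $\tilde v$ as the genuinely delicate points, and the deduction of \eqref{eq chain w j} from \eqref{eq chain} when $D^j v=0$ is immediate as you say. The only caveat is that your argument is a sketch rather than a complete proof: the slicing identification of $D^c w$ and the justification that the same range-null-set fact also handles the limit $\Phi_\varepsilon'(v)\nabla v\to\Phi'(v)\nabla v$ on $\{\nabla v\neq 0\}$ (the $\mathcal L^N$-analogue of the coarea fact you invoke for $D^c v$) are only indicated; for the purposes of this paper, citing \cite[Theorem 3.99]{AFP}, as the author does, is the appropriate level of detail.
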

	\medskip
	Now let us assert two properties of the pairing defined in \eqref{dist1} for bounded variation functions without jump part (see for instance \cite[Lemma 2.6]{GMP}).
	\begin{lemma}
		Let $z \in \DM(\Omega)$ and $u,v \in BV(\Omega) \cap L^\infty(\Omega)$ such that $D^j u=D^j v=0$. Then
		\begin{equation}\label{der prod}
			(z,D(uv))=u(z,Dv)+v(z,Du)=(uz,Dv)+(vz,Du).
		\end{equation}
	\end{lemma}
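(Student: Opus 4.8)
The idea is to split \eqref{der prod} into an elementary part, coming directly from the single-factor Leibniz rule \eqref{Leibniz}, and one genuinely delicate identity, to be proved by approximation. First, since $u,v\in BV(\Omega)\cap L^\infty(\Omega)$ also $uv\in BV(\Omega)\cap L^\infty(\Omega)$, and $J_{uv}\subseteq J_u\cup J_v$, so $D^ju=D^jv=0$ (that is, $\mathcal H^{N-1}(J_u)=\mathcal H^{N-1}(J_v)=0$) forces $D^j(uv)=0$; hence $S_u,S_v,S_{uv}$ are $\mathcal H^{N-1}$-negligible, the precise representatives $u^*,v^*,(uv)^*$ are defined $\mathcal H^{N-1}$-a.e., and $(uv)^*=u^*v^*$ $\mathcal H^{N-1}$-a.e., hence (as $\operatorname{div}z\ll\mathcal H^{N-1}$) also $|\operatorname{div}z|$-a.e. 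In particular all pairings in \eqref{der prod} are well defined, and $u(z,Dv)$, $v(z,Du)$ are understood as $u^*(z,Dv)$, $v^*(z,Du)$.

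Applying \eqref{Leibniz} to the field $z$ and the function $u$ gives $\operatorname{div}(uz)=(z,Du)+u^*\operatorname{div}z$ (and $uz\in\DM_{\rm{loc}}(\Omega)$); applying it to the field $uz$ and the function $v$ gives $\operatorname{div}(v\,uz)=(uz,Dv)+v^*\operatorname{div}(uz)$; and applying it to the field $z$ and the function $uv$ gives $\operatorname{div}(uvz)=(z,D(uv))+(uv)^*\operatorname{div}z$. Combining the three identities and cancelling the $\operatorname{div}z$-terms through $(uv)^*=u^*v^*$ $|\operatorname{div}z|$-a.e.\ yields
\begin{equation*}
(z,D(uv))=(uz,Dv)+v^*(z,Du),
\end{equation*}
and, interchanging $u$ and $v$, also $(z,D(uv))=(vz,Du)+u^*(z,Dv)$. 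Therefore \eqref{der prod} reduces to the single identity $(uz,Dv)=u^*(z,Dv)$ (the symmetric one being the same statement with $u$ and $v$ interchanged).

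For the latter I would first note that when $u$ is a smooth bounded function $\psi$ the identity is elementary: from \eqref{dist1}, an integration by parts against $\operatorname{div}z$ shows $(z,D\psi)=z\cdot\nabla\psi\,\ensuremath{\mathrm d}x$, and then, using $\operatorname{div}(\psi z)=(z,D\psi)+\psi\operatorname{div}z$ and that $\psi\varphi$ is admissible in the pairing $(z,Dv)$, one gets $(\psi z,Dv)=\psi(z,Dv)$ with no remainder term. For a general $u$ with $D^ju=0$ I would use the mollifications $u_m:=u*\rho_{1/m}$ ($\rho$ a standard mollifier): on any $\Omega'\Subset\Omega$ they are smooth, $\|u_m\|_{L^\infty}\le\|u\|_{L^\infty}$, $u_m\to u$ in $L^1_{\rm{loc}}(\Omega)$, and — here $D^ju=0$ enters, forcing $\mathcal H^{N-1}(S_u)=0$ — $u_m\to u^*$ at $\mathcal H^{N-1}$-a.e.\ point, hence $|Dv|$-a.e.\ and $|\operatorname{div}z|$-a.e. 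In $(u_mz,Dv)=u_m(z,Dv)$ one side is immediate: $u_m(z,Dv)\to u^*(z,Dv)$ by dominated convergence with respect to the finite measure $|(z,Dv)|\le\|z\|_{L^\infty(\Omega)^N}|Dv|$. For the other side one needs $(u_mz,Dv)\to(uz,Dv)$; writing the pairing via \eqref{dist1} with $\operatorname{div}(u_mz)=(z,Du_m)+u_m\operatorname{div}z=z\cdot\nabla u_m\,\ensuremath{\mathrm d}x+u_m\operatorname{div}z$, the term containing $u_m\operatorname{div}z$ passes to the limit because $\operatorname{div}z\ll\mathcal H^{N-1}$, and everything hinges on the term $\int_\Omega v^*\varphi\,z\cdot\nabla u_m\,\ensuremath{\mathrm d}x$.

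That term is the main obstacle: it is the integral of the (generally discontinuous) precise representative $v^*$ against the measures $(z,Du_m)=z\cdot\nabla u_m\,\ensuremath{\mathrm d}x$, which converge to $(z,Du)$ only weakly-$\ast$ as measures, so no soft compactness argument suffices. Controlling it requires using that mollification makes $(z,Du_m)$ absolutely continuous, together with the absolute continuity of $\operatorname{div}z$, $(z,Du)$, $(z,Dv)$ with respect to $\mathcal H^{N-1}$, $|Du|$, $|Dv|$ respectively, and ultimately the fine structure of the Anzellotti pairing governing how its Radon-Nikod\'ym density transforms under multiplication by the precise representative of a $BV$ function — and it is exactly here that $D^ju=D^jv=0$ cannot be dispensed with. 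Once $(uz,Dv)=u^*(z,Dv)$ and its companion $(vz,Du)=v^*(z,Du)$ are proved, substituting them into the two mixed identities gives both equalities in \eqref{der prod}. (Equivalently, one may reduce to the diagonal case $u=v$ via $uv=\tfrac14\big[(u+v)^2-(u-v)^2\big]$ and the bilinearity of the pairing in both arguments, so that it suffices to prove $(z,D(w^2))=2w^*(z,Dw)=2(wz,Dw)$ for $w\in BV(\Omega)\cap L^\infty(\Omega)$ with $D^jw=0$.)
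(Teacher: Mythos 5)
Your reduction step is sound and, in fact, the paper itself offers nothing to compare it with: the lemma is not proved in the paper but quoted from \cite[Lemma 2.6]{GMP}, so your attempt has to be judged as a self-contained proof. The elementary part works: applying \eqref{Leibniz} to the pairs $(z,u)$, $(uz,v)$ and $(z,uv)$, using $uz\in\DM_{\rm{loc}}(\Omega)$, $(uv)^*=u^*v^*$ $\mathcal H^{N-1}$-a.e.\ (legitimate because $D^ju=D^jv=0$ makes $S_u,S_v$ $\mathcal H^{N-1}$-negligible) and the absolute continuity of $\operatorname{div}z$ with respect to $\mathcal H^{N-1}$, one does get $(z,D(uv))=(uz,Dv)+v^*(z,Du)$ and its symmetric companion, so the whole lemma correctly reduces to the single identity $(uz,Dv)=u^*(z,Dv)$.

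The genuine gap is that this identity --- which is the actual content of the lemma --- is never proved. Your mollification scheme establishes $u_m(z,Dv)\to u^*(z,Dv)$ by dominated convergence, but for the other side you only write the obstacle down: the limit of $\int_\Omega v^*\varphi\,z\cdot\nabla u_m\,\ensuremath{\mathrm d}x$ is described as requiring ``the fine structure of the Anzellotti pairing'' without any argument being given. This is precisely where the work lies: $(z,Du_m)=z\cdot\nabla u_m\,\ensuremath{\mathrm d}x$ converges to $(z,Du)$ only weakly-$\ast$ as measures, and $v^*$ is merely a bounded Borel function defined $\mathcal H^{N-1}$-a.e., so one cannot pass to the limit in $\int v^*\varphi\,\ensuremath{\mathrm d}(z,Du_m)$ by soft compactness; one needs quantitative information on how the limit pairing charges the (possible) bad set of $v$, e.g.\ via a coarea-type representation of $(z,Dw)$, the locality and density results of Crasta--De Cicco \cite{CDC}, or the argument actually carried out in \cite{GMP}. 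None of this is supplied, so what you have is a correct reduction plus a statement of the remaining difficulty, not a proof. The closing remark that one may polarize and only treat $u=v$ does not help, since the diagonal identity $(z,D(w^2))=2w^*(z,Dw)$ contains exactly the same unresolved step. To make the proposal complete you must either prove $(uz,Dv)=u^*(z,Dv)$ for jump-free bounded $BV$ functions or, as the paper does, invoke \cite[Lemma 2.6]{GMP} (or \cite{CDC}) explicitly at that point.
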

	\medskip
	We conclude this section with a lemma which is an improvement of the one in \cite{O}; it is a regularity result for a vector field in $z \in \DM_{\rm{loc}}(\Omega)$.
	\begin{lemma}\label{lemma estensione}
		Let $0 \le \tilde{f} \in L^1_{\rm{loc}}(\Omega)$, let $\sigma \in \mathcal{M}(\Omega)$ a measure absolutely continuous with respect to $\mathcal{H}^{N-1}$ and let $z \in \mathcal{DM}^{\infty}_{\rm{loc}}(\Omega)$ such that \begin{equation}\label{= generica}
			-\operatorname{div}z + \sigma = \tilde{f} \, \text{in $\mathcal{D}'(\Omega)$,}
		\end{equation}
		then \begin{equation}\label{divz finita}
			\operatorname{div}z \in \mathcal{M}(\Omega).
		\end{equation}
\end{lemma}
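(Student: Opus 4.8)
The plan is to exploit the sign condition $\tilde f\ge0$ together with the $L^\infty$ bound on $z$ and the Lipschitz regularity of $\partial\Omega$ in order to show first that $\tilde f\in L^1(\Omega)$, and then read off \eqref{divz finita} directly from \eqref{= generica}. I would start by noting that $z\in\mathcal{DM}^{\infty}_{\rm loc}(\Omega)$ already forces $\operatorname{div}z\in\mathcal M_{\rm loc}(\Omega)$, so that \eqref{= generica} is the identity of locally finite measures $\operatorname{div}z=\sigma-\tilde f\,\mathcal L^N$; tested against $\varphi\in C^\infty_c(\Omega)$ it reads
\begin{equation*}
\int_\Omega z\cdot\nabla\varphi\,\ensuremath{\mathrm d}x+\int_\Omega\varphi\,\ensuremath{\mathrm d}\sigma=\int_\Omega\varphi\,\tilde f\,\ensuremath{\mathrm d}x .
\end{equation*}
Since all three terms are automatically controlled on compact subsets of $\Omega$, the only possible source of infinite mass is the behaviour of $\operatorname{div}z$ near $\partial\Omega$, so the whole point is to produce cut-off functions that are essentially equal to $1$ but whose gradients carry a uniformly bounded $L^1$-mass near the boundary.

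To this end I would set $d(x):=\operatorname{dist}(x,\partial\Omega)$ and, for small $\varepsilon>0$, define $\psi_\varepsilon(x):=\min\bigl\{1,(d(x)/\varepsilon-1)^+\bigr\}$, extended by $0$ outside $\Omega$. Then $\psi_\varepsilon$ is Lipschitz, $0\le\psi_\varepsilon\le1$, $\psi_\varepsilon\equiv1$ on $\{d\ge2\varepsilon\}$, $\operatorname{supp}\psi_\varepsilon\subset\{d\ge\varepsilon\}\subset\subset\Omega$, and $|\nabla\psi_\varepsilon|\le1/\varepsilon$ a.e.\ with $\nabla\psi_\varepsilon$ vanishing outside the shell $\{\varepsilon\le d\le2\varepsilon\}$. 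Mollifying, $\varphi_\varepsilon:=\psi_\varepsilon*\rho_\delta$ with $\delta$ small compared to $\varepsilon$ belongs to $C^\infty_c(\Omega)$, retains $0\le\varphi_\varepsilon\le1$, satisfies $|\nabla\varphi_\varepsilon|\le1/\varepsilon$, has $\nabla\varphi_\varepsilon$ supported in $\{d<3\varepsilon\}$, and converges to $1$ pointwise in $\Omega$ as $\varepsilon\to0$. The one genuinely geometric ingredient — and the only place where the hypothesis that $\partial\Omega$ is Lipschitz enters — is the bound $|\{x\in\Omega:d(x)<3\varepsilon\}|\le C\varepsilon$ with $C=C(\Omega)$, which follows by covering $\partial\Omega$ with finitely many charts in which $\Omega$ is the subgraph of a Lipschitz function and estimating the volume of a tubular neighbourhood of a Lipschitz graph over a bounded set. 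Consequently
\begin{equation*}
\int_\Omega|\nabla\varphi_\varepsilon|\,\ensuremath{\mathrm d}x\le\frac1\varepsilon\,\bigl|\{x\in\Omega:d(x)<3\varepsilon\}\bigr|\le C\qquad\text{for every }\varepsilon>0 .
\end{equation*}

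Finally I would put $\varphi=\varphi_\varepsilon$ in the identity above: since $0\le\varphi_\varepsilon\le1$ and $\|z\|_{L^\infty(\Omega)^N}<\infty$,
\begin{equation*}
\int_\Omega\varphi_\varepsilon\,\tilde f\,\ensuremath{\mathrm d}x=\int_\Omega z\cdot\nabla\varphi_\varepsilon\,\ensuremath{\mathrm d}x+\int_\Omega\varphi_\varepsilon\,\ensuremath{\mathrm d}\sigma\le C\,\|z\|_{L^\infty(\Omega)^N}+|\sigma|(\Omega),
\end{equation*}
a bound independent of $\varepsilon$. Letting $\varepsilon\to0$, Fatou's lemma and $\tilde f\ge0$ give $\int_\Omega\tilde f\,\ensuremath{\mathrm d}x<\infty$, i.e.\ $\tilde f\in L^1(\Omega)$; then $\operatorname{div}z=\sigma-\tilde f\,\mathcal L^N$ and hence $|\operatorname{div}z|(\Omega)\le|\sigma|(\Omega)+\|\tilde f\|_{L^1(\Omega)}<\infty$, which is \eqref{divz finita}. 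I expect the construction of $\varphi_\varepsilon$ and, above all, the boundary-shell estimate $|\{d<3\varepsilon\}|\le C\varepsilon$ to be the only delicate point; everything else is a routine manipulation of \eqref{= generica}. Note that the assumption $\sigma\ll\mathcal H^{N-1}$ is not used beyond $\sigma\in\mathcal M(\Omega)$: it only ensures the consistency of \eqref{= generica}, since $\operatorname{div}z$ and $\tilde f\,\mathcal L^N$ are both absolutely continuous with respect to $\mathcal H^{N-1}$.
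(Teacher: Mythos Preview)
Your argument is correct. Both proofs share the same skeleton: show $\tilde f\in L^1(\Omega)$ by testing \eqref{= generica} against a sequence of compactly supported functions tending to $1$ whose gradients have uniformly bounded $L^1$-mass, and then conclude via $\operatorname{div}z=\sigma-\tilde f\,\mathcal L^N$. The difference lies in how that sequence is built. The paper proceeds in two stages: it first passes from $C^1_c$ to $W^{1,1}_0\cap C^0\cap L^\infty$ test functions by density, then invokes a Gagliardo-type trace lemma (\cite[Lemma~5.5]{A}) to produce functions $w_n\in W^{1,1}\cap C^0\cap L^\infty$ with prescribed boundary trace, $\int_\Omega|\nabla w_n|$ controlled by a boundary integral, and $w_n\to0$ a.e.; testing with $|\tilde v-w_n|$ for general $\tilde v\in W^{1,1}\cap C^0\cap L^\infty$ and finally taking $\tilde v\equiv1$ gives $\tilde f\in L^1(\Omega)$. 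Your route is more direct and self-contained: the distance-function cut-offs plus the tubular-neighbourhood estimate $|\{d<3\varepsilon\}|\le C\varepsilon$ (standard for Lipschitz boundaries) give the required uniform $L^1$ bound on the gradients in one step, with no appeal to an external lemma. Your observation that only $\sigma\in\mathcal M(\Omega)$ is needed (not $\sigma\ll\mathcal H^{N-1}$) is also accurate for the conclusion \eqref{divz finita}; the paper uses the absolute-continuity hypothesis to pass to the limit in $\int_\Omega\varphi_n\,\ensuremath{\mathrm d}\sigma$, whereas in your argument the bound $0\le\varphi_\varepsilon\le1$ makes that step trivial. The paper's approach has the side benefit of yielding the intermediate inequality \eqref{dis per l1} for all $\tilde v\in W^{1,1}\cap C^0\cap L^\infty$, but for the lemma as stated your construction suffices.
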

\begin{proof}
We choose $0 \le v \in W^{1,1}_0(\Omega) \cap C^0(\Omega) \cap L^{\infty}(\Omega)$ and we let $\varphi_n \in C^1_c(\Omega)$ be a sequence of nonnegative functions converging to $ v$ strongly in $W^{1,1}_0(\Omega)$. Picking out $\varphi_n$ as a test function \black in \eqref{= generica}, we obtain
\begin{equation*}
	\int_{\Omega} z \cdot \nabla \varphi_n + \int_{\Omega}  \varphi_n \sigma=\int_{\Omega}\tilde{f} \varphi_n.
\end{equation*}
Our aim is taking the limit as $n$ tends to infinity in the previous equality. 
\\For the first term on the left-hand side, it is sufficient observing that $z \in L^{\infty}(\Omega)^N$ and $\varphi_n \to v$ strongly in $W^{1,1}_0(\Omega)$. 
\\In the second one, we pass to the limit through the Lebesgue Theorem because $\varphi_n \to v$ $\sigma-$a. e., since  $\sigma \in \mathcal{M}(\Omega)$ is absolutely continuous with respect to $\mathcal{H}^{N-1}$ and $\varphi_n \to v$ strongly in $W^{1,1}_0(\Omega)$. 
\\For the right-hand side using the Fatou Lemma, we gain
\begin{equation}\label{sol con W^1,1_0}
	\int_{\Omega}\tilde{f} v \le \int_{\Omega}z \cdot \nabla v + \int_{\Omega}v \sigma .
\end{equation}
Now we take $\tilde{v} \in W^{1,1}(\Omega) \cap C^0(\Omega) \cap L^\infty(\Omega)$ and by virtue of a Gagliardo Lemma (see \cite[Lemma 5.5]{A}), there exists $w_n \in W^{1,1}(\Omega) \cap C^0(\Omega) \cap L^\infty(\Omega)$ such that 
\begin{itemize}
	\item $w_n|_{\partial \Omega}=\tilde{v}|_{\partial \Omega}$,
	\item $\|w_n\|_{L^{\infty}(\Omega)} \le \| \tilde{v}\|_{L^{\infty}(\partial\Omega)}$,
	\item $\int_{\Omega} | \nabla w_{n}| \le \int_{\partial \Omega} \tilde{v} \,\ensuremath{\mathrm d}\mathcal{H}^{N-1} + \frac{1}{n}$,
	\item $w_n \to 0$ a.e. in $\Omega$.
\end{itemize}
Taking \black $|\tilde{v} - w_n| \in W^{1,1}_0(\Omega) \cap C^{0} (\Omega) \cap L^\infty(\Omega)$  as a test function \black in \eqref{sol con W^1,1_0}, we get
\begin{eqnarray*}
	\begin{aligned}
		&\int_{\Omega} \tilde{f} | \tilde{v} -w_n|\le \int_{\Omega} z \cdot \nabla|\tilde{v}-w_n| +\int_{\Omega}  |\tilde{v}-w_n| \sigma \\ \le 
		&\|z\|_{L^\infty(\Omega)^N} \left(\int_{\Omega}|\nabla \tilde{v}| + \int_{\Omega}|\nabla w_n|\right) + 2 |\sigma|(\Omega)\|\tilde{v}\|_{L^{\infty}(\Omega)} \\ \le 
		& \|z\|_{L^\infty(\Omega)^N} \left(\int_{\Omega}|\nabla \tilde{v}| + \int_{\partial \Omega} \tilde{v} \, \ensuremath{\mathrm d}\mathcal{H}^{N-1} + \frac{1}{n}\right)+2 |\sigma|(\Omega)\|\tilde{v}\|_{L^{\infty}(\Omega)}.
	\end{aligned}
\end{eqnarray*}
Taking limit as $n$ tends to infinity, from the Fatou Lemma, we have
\begin{equation}\label{dis per l1}
	\int_{\Omega}  \tilde{f} \tilde{v} \le \|z\|_{L^\infty(\Omega)^N} \left(\int_{\Omega}|\nabla \tilde{v}| + \int_{\partial \Omega}\tilde{v} \, \ensuremath{\mathrm d}\mathcal{H}^{N-1}\right) + 2 |\sigma|(\Omega)\|\tilde{v}\|_{L^{\infty}(\Omega)}.
\end{equation}
Then, fixing $\tilde{v} \equiv 1$ in \eqref{dis per l1}, one gets that $\tilde{f} \in L^1(\Omega)$. Therefore this implies \eqref{divz finita}.
\end{proof}
\begin{remark}\label{oss estensione}
As a direct consequence of Lemma \ref{lemma estensione}, we can extend the space of test function to $BV(\Omega)\cap L^\infty(\Omega)$ for the equation \eqref{= generica}, it can show through a density argument, it holds
$$\int_\Omega (z,Dv) -\int_{\partial \Omega} v[z,\nu] \ensuremath{\mathrm d}\mathcal{H}^{N-1} +\int_\Omega v^* \sigma=\int_\Omega \tilde{f} v \quad \text{for all $v \in BV(\Omega) \cap L^\infty(\Omega)$.}$$
\end{remark}
\section{Main assumptions and results for $f>0$}
In this section we deal with existence of solutions to
\begin{equation}\label{problema}
\begin{cases}
	- \Delta_{1}u+g(u)|Du|=h(u)f & \text{in $\Omega$,}\\
	u=0 &\text{on $\partial \Omega$,}
\end{cases}
\end{equation}
where $\Omega \subset \mathbb{R}^{N}$, with $N \ge 2$, is an open and bounded set with Lipschitz boundary, $\Delta_{1}u:=\operatorname{div}\left(\frac{Du}{|Du|}\right)$ is the $1-$laplacian operator. Firstly, we study the case with $0<f \in L^N(\Omega)$. \medskip \\The function $g:[0,\infty) \to (0,\infty]$ \black is continuous and such that \begin{equation}\label{g in zero}
\exists 0 \le \theta < 1, c_{1},s_{1}>0: g(s) \le \frac{c_{1}}{s^{\theta}} \text{ for all $s \le s_{1}$,} 
\end{equation} and
\begin{equation}\label{g non nulla}
\liminf_{s \to \infty} g(s)>0.
\end{equation}
\medskip
Moreover $h:[0,\infty) \to [0,\infty]$ \black is a continuous function such that $h(0)>0$ and
\begin{equation}\label{h in zero}
\exists 0 \le \gamma \le 1, c_{2},s_{2}>0: h(s) \le \frac{c_{2}}{s^{\gamma}} \text{ for all $s \le s_{2}$,} 
\end{equation}
and we require \begin{equation}\label{h g infinito}
h,g \in  C^0_b([\delta, \infty)), \forall \delta>0.
\end{equation}
\medskip
Furthermore we introduce the function $\Gamma : \mathbb{R} \to \mathbb{R}$
\begin{equation}\label{prim di g}
\Gamma(s):=\int_0^s g(\sigma) \, \ensuremath{\mathrm d}\sigma.
\end{equation}
\medskip
\\We set the following quantities which are also useful in the next proofs
$$g_k(\infty):=\sup_{s \in [k, \infty)}g(s) \quad \text{and} \quad g(\infty):=\limsup_{s \to \infty} g(s) < \infty,$$
$$h_k(\infty):=\sup_{s \in [k, \infty)}h(s) \quad \text{and} \quad h(\infty):=\limsup_{s \to \infty} h(s)< \infty.$$
\medskip
\\Let us highlight that the classical case $g,h \equiv 1$ is covered by the above assumptions as $\gamma, \theta$ could be zero.
\medskip
\\Now we precise how the concept of distributional solution for problem \eqref{problema} is intended.
\begin{defin}\label{sol}
Let $0<f\in L^N(\Omega)$. A nonnegative $u\in BV(\Omega)$ is a distributional solution to \eqref{problema} if $D^j u = 0$, $g(u) \in L^1_{\rm{loc}}(\Omega, |Du|)$ and $h(u)f \in L^1_{\rm{loc}}(\Omega)$ and if there exists a vector field $z \in \mathcal{DM}^{\infty}_{\rm{loc}}(\Omega)$, with $\|z\|_{L^\infty(\Omega)^N} \le 1$ such that
\begin{equation}\label{sol 1}
	- \operatorname{div}z + g(u)|Du| = h(u) f \quad \text{as measures in $\Omega$,}
\end{equation}  
\begin{equation}\label{sol 2}
	(z, DT_k(u))=|DT_k(u)| \,\, \text{in $\mathcal{D}'(\Omega)$ for any $k>0$,}
\end{equation}
and \begin{equation}\label{sol 3}
	u(x)=0 \, \, \text{for $\mathcal{H}^{N-1}-$a.e. $x \in \partial \Omega$.}
\end{equation}
\end{defin}
\begin{remark}\label{rem post def}
Let us briefly discuss Definition \ref{sol}. Firstly, formulas \eqref{sol 1} and \eqref{sol 2} represent the weak manner in which \( z \) plays the role of as the singular quotient \( \frac{Du}{|Du|} \).
\medskip
\\Furthermore, \eqref{sol 3} underscores that the boundary condition is assumed to hold pointwise. This is intrinsically linked to the presence of the gradient term, as will be examined later. It is now widely recognized that solutions to \( 1 \)-Laplace Dirichlet problems typically do not satisfy the pointwise enforcement of boundary conditions when \( g \equiv 0 \) (see, for instance, \cite{DCGOP, LTS, O}). In such cases, the weaker condition 
\[
u\left([z,\nu] + \sgn(u)\right) = 0 \quad \mathcal{H}^{N-1}\text{-a.e. on } \partial \Omega
\]
is usually imposed. This condition essentially asserts that either \( u \) has a zero trace, or the weak trace of the normal component of \( z \) attains the minimal possible slope at the boundary.
\medskip
\\Finally, we highlight that if \( h(0) = \infty \), since \( h(u)f \in L^1_{\rm{loc}}(\Omega) \), it follows that \( \{u = 0\} \subseteq \{f = 0\} \). Therefore, given \( f > 0 \), we deduce that \( u > 0 \).

\end{remark}
At this point we state the main result of this section.
\begin{theorem}\label{teo princ}
Let $g$ be positive and satisfy \eqref{g in zero}, \eqref{g non nulla} and \eqref{h g infinito}, let $h$ satisfy \eqref{h in zero} and \eqref{h g infinito} such that  $0\le \theta < 1$, $0 \le \gamma \le 1$, and $\theta+\gamma \le 1$  and let $0<f \in L^N(\Omega)$. Then there exists a solution to problem \eqref{problema} in the sense of Definition \ref{sol}.
\\Moreover if $\|f\|_{L^N(\Omega)}<(\mathcal{S}_1 h(\infty))^{-1}$, then $u \in L^\infty(\Omega) $.
\end{theorem}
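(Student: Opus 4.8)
The plan is to construct a pair $(u,z)$ as a limit, along $p\downarrow 1$, of solutions of approximating $p$-Laplacian problems, and then to read off the $L^\infty$ bound from the limit equation by a Stampacchia-type argument. For $n\in\mathbb{N}$ put $g_n:=\min\{g,n\}$, $h_n:=\min\{h,n\}$, $f_n:=T_n(f)$ and $\Gamma_n(s):=\int_0^s g_n$; these are continuous and bounded, $g_n\uparrow g$, $h_n\uparrow h$, $f_n\uparrow f$, with $g_n(s)\le c_1 s^{-\theta}$ and $h_n(s)\le c_2 s^{-\gamma}$ near $0$ uniformly in $n$, while $\Gamma_n$ is finite and Lipschitz because $\theta<1$. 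For $1<p<2$ the problem $-\Delta_p u_{n,p}+g_n(u_{n,p})|\nabla u_{n,p}|^p=h_n(u_{n,p})f_n$ with $u_{n,p}\in W_0^{1,p}(\Omega)$ has a nonnegative solution in $W_0^{1,p}(\Omega)\cap L^\infty(\Omega)$ by the classical theory for natural-growth problems with an absorbing gradient term (cf. \cite{BG,BMP}); writing $z_{n,p}:=|\nabla u_{n,p}|^{p-2}\nabla u_{n,p}$, the equation reads $-\operatorname{div}z_{n,p}+g_n(u_{n,p})|\nabla u_{n,p}|^p=h_n(u_{n,p})f_n$.

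\emph{Uniform estimates.} The point that dispenses with any smallness of $\|f\|_{L^N(\Omega)}$ is to test with the \emph{bounded} function $1-e^{-\Gamma_n(u_{n,p})}$: the two gradient contributions recombine into $\int_\Omega g_n(u_{n,p})|\nabla u_{n,p}|^p$, giving
\begin{equation*}
\int_\Omega g_n(u_{n,p})|\nabla u_{n,p}|^p=\int_\Omega h_n(u_{n,p})f_n\bigl(1-e^{-\Gamma_n(u_{n,p})}\bigr).
\end{equation*}
Since $1-e^{-\Gamma_n(s)}\le\Gamma_n(s)\le Cs^{1-\theta}$ for small $s$, we get $h_n(s)\bigl(1-e^{-\Gamma_n(s)}\bigr)\le Cs^{1-\theta-\gamma}$ there, which is bounded exactly because $\theta+\gamma\le1$; away from $0$ it is bounded because $h_n$ is. Hence the right-hand side is $\le C\|f\|_{L^1(\Omega)}\le C$, uniformly in $n,p$. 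Using $g_n\ge c_\delta>0$ on $[\delta,\infty)$ (by \eqref{g non nulla}) on $\{u_{n,p}\ge\delta\}$, and testing with $T_\delta(u_{n,p})$ together with $\gamma\le1$ on $\{u_{n,p}<\delta\}$, one obtains $\int_\Omega|\nabla u_{n,p}|^p\le C$; consequently $\|u_{n,p}\|_{L^{N/(N-1)}(\Omega)}\le C$ by Sobolev, and $\|z_{n,p}\|_{L^q(\Omega)^N}\le C_q$ for every $q<\infty$, since $\|z_{n,p}\|_{L^q}^q=\int_\Omega|\nabla u_{n,p}|^{q(p-1)}\le|\Omega|^{1-q(p-1)/p}C^{q(p-1)/p}\to|\Omega|$ as $p\downarrow1$. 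Finally, because $h(0)>0$, a comparison argument as in \cite{DCGOP,BOP} yields a local lower bound $u_{n,p}\ge c_\omega>0$ on every $\omega\subset\subset\Omega$, uniform in $n$ and in $p$ close to $1$.

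\emph{Limit and properties of the solution.} Along a diagonal subsequence $u_{n,p}\to u$ in $L^1(\Omega)$ and a.e., $u\in BV(\Omega)$ (the trace term of $\|u\|_{BV(\Omega)}$ being captured by the lower-semicontinuity inequality $\liminf\int_\Omega|\nabla u_{n,p}|\ge\|u\|_{BV(\Omega)}$, extending by zero), and $z_{n,p}\rightharpoonup z$ $*$-weakly in $L^\infty(\Omega)^N$ with $\|z\|_{L^\infty(\Omega)^N}\le1$ (let $q\to\infty$ above). The identity $(z,DT_k(u))=|DT_k(u)|$ follows by combining $(z,DT_k(u))\le|DT_k(u)|$ from \eqref{finitetotal1} with its reverse, obtained from lower semicontinuity and Young's inequality $|\nabla T_k(u_{n,p})|^p\ge p|\nabla T_k(u_{n,p})|-(p-1)$ as $p\downarrow1$. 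Passing to the limit in the equation requires the strong convergence $|\nabla T_k(u_{n,p})|^p\,dx\to|DT_k(u)|$ of the truncated gradient energies and, near $\{u=0\}$, the convergence $h_n(u_{n,p})f_n\to h(u)f$ in $L^1_{\mathrm{loc}}(\Omega)$ (here the local lower bound and the exponents $\gamma\le1$, $\theta+\gamma\le1$ are used); \emph{this is the main technical obstacle}. That $\operatorname{div}z\in\mathcal{M}(\Omega)$ is Lemma \ref{lemma estensione}. The regularizing effect $D^ju=0$ comes from the gradient term: $(z,DT_k(u))=|DT_k(u)|$ forces $z$ to achieve its maximal slope on both sides of $J_u$, so $(\operatorname{div}z)\res J_u=0$, and the equation restricted to $J_u$ then gives $g(u)^{\#}|D^ju|=0$, whence $D^ju=0$ because $g>0$. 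The pointwise condition $u=0$ on $\partial\Omega$ is obtained by testing with $T_k(u)$ in the Gauss--Green identity \eqref{GG form}, comparing with the lower-semicontinuity inequality to get $u(1+[z,\nu])=0$ $\mathcal{H}^{N-1}$-a.e.\ on $\partial\Omega$, and ruling out $[z,\nu]=-1$ where the trace of $u$ is positive, by the presence of the gradient term, exactly as in \cite{LTS,BOP}.

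\emph{The $L^\infty$ bound.} Assume $\|f\|_{L^N(\Omega)}<(\mathcal{S}_1h(\infty))^{-1}$. Since $T_j(G_k(u))=T_{k+j}(u)-T_k(u)$ has no jump part, zero trace, and $|DT_{k+j}(u)|=|DT_k(u)|+|DT_j(G_k(u))|$, the additivity of the pairing and \eqref{sol 2} give $(z,DT_j(G_k(u)))=|DT_j(G_k(u))|$. Using $T_j(G_k(u))\in BV(\Omega)\cap L^\infty(\Omega)$ as a test function (Remark \ref{oss estensione}) and discarding the nonnegative term $\int_\Omega T_j(G_k(u))\,g(u)|Du|$,
\begin{equation*}
\int_\Omega|DT_j(G_k(u))|\le\int_{\{u>k\}}h(u)f\,T_j(G_k(u))\le h_k(\infty)\,\mathcal{S}_1\,\|f\|_{L^N(\Omega)}\int_\Omega|DT_j(G_k(u))|,
\end{equation*}
where we used $h\le h_k(\infty)$ on $[k,\infty)$, Hölder's inequality, \eqref{f embedding} and the vanishing of the boundary term in $\|T_j(G_k(u))\|_{BV(\Omega)}$. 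As $h_k(\infty)\downarrow h(\infty)$ when $k\to\infty$ and $h(\infty)\mathcal{S}_1\|f\|_{L^N(\Omega)}<1$, for $k$ large one has $h_k(\infty)\mathcal{S}_1\|f\|_{L^N(\Omega)}<1$, which forces $\int_\Omega|DT_j(G_k(u))|=0$ for every $j$; letting $j\to\infty$ gives $G_k(u)\equiv0$, i.e.\ $u\le k$ a.e., so $u\in L^\infty(\Omega)$.
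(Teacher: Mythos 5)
Your scheme (double approximation in $n$ and $p\downarrow 1$) is legitimate in principle, but as written it has a genuine gap exactly where the real work of this theorem lies: the passage to the limit in the equation. You yourself flag as ``the main technical obstacle'' the convergence $|\nabla T_k(u_{n,p})|^p\,dx\to |DT_k(u)|$, the identification of the limit of the measures $g_n(u_{n,p})|\nabla u_{n,p}|^p\,dx$ with $g(u)|Du|$, and the convergence of $h_n(u_{n,p})f_n$ near $\{u=0\}$, and you give no argument for any of them; lower semicontinuity only yields inequalities in one direction, and the whole difficulty of \eqref{sol 1}--\eqref{sol 2} with a singular $g$ and $h$ is precisely to turn these into equalities. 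Your argument for $D^ju=0$ is moreover circular as sketched: you restrict ``the equation'' to $J_u$ to get $g(u)^{\#}|D^ju|=0$, but the equation with $g(u)|Du|$ on the left is only available after the gradient-term measure has been identified, which in turn is what requires $D^ju=0$ (in the paper this knot is untied via the exponential reformulation \eqref{= per Gamma e z}, the inequality \eqref{dis per z}, the $\#$-pairing and the chain \eqref{catena di dis}, together with \cite[Lemma 2.3]{BOP} and \cite[Lemma 2.4, Remark 2.5]{GOP}). Two further steps are asserted rather than proved: the uniform-in-$(n,p)$ local lower bound $u_{n,p}\ge c_\omega>0$ from $h(0)>0$ (a strong claim, not used or established in the paper, whose positivity information comes instead from $h(u)f\in L^1_{\rm loc}(\Omega)$ and $f>0$), and the pointwise boundary condition \eqref{sol 3}, which you reduce to ``ruling out $[z,\nu]=-1$ \ldots exactly as in \cite{LTS,BOP}''; those references treat bounded $g$, and in the present singular setting the paper obtains \eqref{sol 3} from the quantitative boundary identity \eqref{= al bordo} involving $\tilde\Gamma_n(u_n)$, which has no counterpart in your outline.

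Note also that the paper does not redo the $p$-limit at all: it starts from solutions of the $1$-Laplacian problems \eqref{prob approx} with truncated $g_n=T_n(g)$ already provided by \cite[Theorem 4.4]{BOP}, so the only limit is in $n$, and all the delicate structure (the equalities \eqref{sol approx 2}, the exponential equation, the boundary identity) is inherited from the approximants. Your a priori estimates (testing with $1-\ensuremath{\mathrm e}^{-\Gamma_n}$ and a small truncation, using $\theta+\gamma\le 1$) are in the right spirit and parallel the paper's Lemma \ref{lemma stima}, and your $L^\infty$ argument via $T_j(G_k(u))$ at the level of the limit function is a correct variant of the paper's Stampacchia argument (which is instead performed uniformly on $u_n$), but it is conditional on \eqref{sol 1}--\eqref{sol 3} and Remark \ref{oss estensione}, i.e.\ on precisely the part of the proof that is missing.
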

\begin{remark}\label{oss assumptions} 
Let us spend few words on the statements of Theorem \ref{teo princ}.
\medskip
\\Initially, the condition \eqref{g non nulla} is necessary for the existence of the function \( u \) (for more details see Lemma \ref{lemma stima}).
\medskip
\\We emphasize that the assumption regarding the exponent $\theta$ is necessary for the integrability of the function $g$. In contrast, the statement concerning $\gamma$ is natural for solution to be globally in $BV(\Omega)$, in fact when $g \equiv 0$ and $\gamma>1$ the solutions are only locally in $BV(\Omega)$ (for more details see \cite{DCGOP}). Additionally, the condition $\theta + \gamma \le 1$ and $f \in L^N(\Omega)$ are crucial  to have $\Gamma(u) \in BV(\Omega)$ (see Lemma \ref{lemma stima}), \black where $\Gamma$ is the function defined in \eqref{prim di g}.
\medskip
\\We underline that the positivity of the function \( g \) is necessary for \( \Gamma \)  to be increasing, which is essential to prove that \( D^j u = 0 \).
\medskip
\\Finally, if $\|f\|_{L^N(\Omega)}< (\mathcal{S}_1 h(\infty))^{-1}$, we can show that $u \in L^\infty(\Omega)$ through a Stampacchia's method, as a consequence fixing $k> \|u\|_{L^\infty(\Omega)}$, \eqref{sol 2} becomes 
$$\left(z, Du\right)=|Du| \quad \text{as measures in $\Omega$.}$$ 

\end{remark}
\subsection{Approximation scheme and existence of a limit function}
We find a solution of the problem \eqref{problema} by approximation, let us consider \begin{equation}\label{prob approx}
\begin{cases}
	- \Delta_{1}u_{n}+g_{n}(u_n)|Du_{n}|=h(u_{n})f & \text{in $\Omega$,}\\
	u_n=0 &\text{on $\partial \Omega$,}
\end{cases}
\end{equation} 
where $g_{n}(s):=T_{n}(g(s))$ for any $s \in [0, \infty)$, $n \in \mathbb{N}$ and $T_n$ is the truncation function defined in \eqref{trc fun}.
\medskip
\\It follows from \cite[Theorem 4.4]{BOP} that there exists a solution to \eqref{prob approx}, i.e. there exists $z_n \in \mathcal{DM}^{\infty}_{\rm{loc}}(\Omega)$ with $\|z_n\|_{L^{\infty}(\Omega)^{N}} \le 1$ and a nonnegative function $u_{n} \in BV(\Omega)$ such that $D^j u_n=0$, $g_n(u_n) \in L^{1}(\Omega, |Du_n|)$ and $h(u_n)f \in L^{1}_{\rm{loc}}(\Omega)$ such that
\begin{equation}\label{sol approx 1}
- \operatorname{div}z_n + g_n(u_n)|Du_n|=h(u_n)f \quad \text{as measures in $\Omega$,}
\end{equation}
\begin{equation}\label{sol approx 2}
(z_n, DT_k (u_n))=|DT_k(u_n)| \text{ as measures in $\Omega$, for any $k>0$,}
\end{equation}
and \begin{equation}\label{sol approx 3}
u_n(x)=0 \text{ for $\mathcal{H}^{N-1}-$a.e. $x\in \partial \Omega$.}
\end{equation}
\medskip
Now we estimate $u_n$ and $\Gamma_n(u_n)$ in $BV(\Omega)$, where, for every $n \in \mathbb{N}$, $\Gamma_n: \mathbb{R} \to \mathbb{R}$ is such that
\begin{equation*}
\Gamma_n(s):=\int_0^s g_n(\sigma) \, \ensuremath{\mathrm d}\sigma.
\end{equation*}
\begin{lemma}\label{lemma stima}
Let $g$ be positive and satisfying \eqref{g in zero}, \eqref{g non nulla} and \eqref{h g infinito}. Let $h$ satisfy \eqref{h in zero} and \eqref{h g infinito} such that $0\le \theta < 1$, $0 \le \gamma \le 1$ and $\theta+\gamma \le 1$,  and let $0<f \in L^N(\Omega)$. Finally let $u_n$ be a solution to \eqref{prob approx}. \\Then $u_n$, $\Gamma_n(u_n)$ are uniformly bounded with respect to $n$ in $BV(\Omega)$, there exists $C_1,C_2>0$ independent of $n \in \mathbb{N}$ such that
\begin{equation}\label{u_n in BV}
	\| u_n\|_{BV(\Omega)} \le C_1,
\end{equation}  and
\begin{equation}\label{Gamma_n BV}
	\|\Gamma_n(u_n)\|_{BV(\Omega)} \le C_2,
\end{equation}

and $h(u_n)f$ is uniformly bounded with respect to $n \in \mathbb{N}$ in $L^1(\omega)$, for every $\omega \subset \subset \Omega$. \black \\Moreover if $\|f\|_{L^N(\Omega)}<(\mathcal{S}_1h(\infty))^{-1}$, then $u_n$ is uniformly bounded in $L^\infty(\Omega)$. 
\end{lemma}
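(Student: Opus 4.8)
The plan is to obtain both bounds by testing the approximate equation \eqref{sol approx 1} against suitable functions of $u_n$. First I would observe that, since $h(u_n)f\ge0$ lies in $L^1_{\rm loc}(\Omega)$ and $g_n(u_n)|Du_n|$ is a finite measure absolutely continuous with respect to $\mathcal H^{N-1}$ (recall $g_n(u_n)\in L^1(\Omega,|Du_n|)$ is part of the notion of solution), Lemma \ref{lemma estensione} gives $\operatorname{div}z_n\in\mathcal M(\Omega)$, so by Remark \ref{oss estensione} one may test \eqref{sol approx 1} against any $v\in BV(\Omega)\cap L^\infty(\Omega)$:
\[
\int_\Omega(z_n,Dv)-\int_{\partial\Omega}v\,[z_n,\nu]\,\ensuremath{\mathrm d}\mathcal H^{N-1}+\int_\Omega v\,g_n(u_n)|Du_n|=\int_\Omega v\,h(u_n)f .
\]
Moreover, since $D^ju_n=0$ and \eqref{sol approx 2} holds for every $k$, localizing the pairing and letting $k\to\infty$ yields $(z_n,Du_n)=|Du_n|$ as measures; consequently, whenever $\Phi\in C^1$ with $\Phi(0)=0$, the function $\Phi(u_n)$ has zero trace on $\partial\Omega$ by \eqref{sol approx 3} and $(z_n,D\Phi(u_n))=\Phi'(u_n)|Du_n|$ (for $u_n$ unbounded one first works with $\Phi(T_k(u_n))$ and passes to the limit). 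In particular, $\Gamma_n$ being Lipschitz with $\Gamma_n(0)=0$, we get $\Gamma_n(u_n)\in BV(\Omega)$ with zero trace and $D\Gamma_n(u_n)=g_n(u_n)Du_n$, so $\|\Gamma_n(u_n)\|_{BV(\Omega)}=\int_\Omega g_n(u_n)|Du_n|$ and $\|u_n\|_{BV(\Omega)}=\int_\Omega|Du_n|$; thus everything reduces to bounding these two quantities uniformly.

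For \eqref{Gamma_n BV}, I would test the displayed identity with $v=1-e^{-\Gamma_n(u_n)}\in[0,1)$ (approximating with $1-e^{-\Gamma_n(T_k(u_n))}$ if needed). Since $Dv=g_n(u_n)e^{-\Gamma_n(u_n)}Du_n$, one has $(z_n,Dv)=g_n(u_n)e^{-\Gamma_n(u_n)}|Du_n|$ and the boundary term vanishes; adding the two gradient terms collapses the identity to
\[
\int_\Omega g_n(u_n)|Du_n|=\int_\Omega\bigl(1-e^{-\Gamma_n(u_n)}\bigr)h(u_n)f .
\]
It then suffices to bound the integrand: with $\delta:=\min\{s_1,s_2\}$, on $\{u_n\le\delta\}$ I use $1-e^{-t}\le t$, the estimate $\Gamma_n(s)\le\frac{c_1}{1-\theta}s^{1-\theta}$ coming from \eqref{g in zero} (where $\theta<1$ is used), \eqref{h in zero}, and the hypothesis $\theta+\gamma\le1$ to get $\bigl(1-e^{-\Gamma_n(u_n)}\bigr)h(u_n)\le C\,u_n^{\,1-\theta-\gamma}\le C$, while on $\{u_n>\delta\}$ one simply has $\bigl(1-e^{-\Gamma_n(u_n)}\bigr)h(u_n)\le h_\delta(\infty)<\infty$ by \eqref{h g infinito}. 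Hence $\|\Gamma_n(u_n)\|_{BV(\Omega)}\le C\|f\|_{L^1(\Omega)}\le C\|f\|_{L^N(\Omega)}=:C_2$.

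For \eqref{u_n in BV} a plain truncation does not suffice, because $\int_\Omega h(u_n)f$ is in general not uniformly bounded (only locally, see below); the right device is the test function generated by the first-order ODE attached to the absorption term. Set $\Psi_n(s):=e^{-\Gamma_n(s)}\int_0^s e^{\Gamma_n(t)}\,\ensuremath{\mathrm d}t$, the $C^1$ solution of $\Psi_n'+g_n\Psi_n=1$, $\Psi_n(0)=0$. I would prove $0\le\Psi_n(s)\le s$ (immediate from $\Psi_n'=1-g_n\Psi_n\le1$) and $\Psi_n(s)\le C$ uniformly in $n$ and $s$ — this is exactly where \eqref{g non nulla} enters: one picks $s_*$ and $\lambda>0$ with $g\ge\lambda$ on $[s_*,\infty)$ and estimates $e^{-\Gamma_n(s)}\int_0^{s_*}e^{\Gamma_n(t)}\,\ensuremath{\mathrm d}t\le s_*e^{\Gamma(s_*)}$ and $e^{-\Gamma_n(s)}\int_{s_*}^se^{\Gamma_n(t)}\,\ensuremath{\mathrm d}t\le\lambda^{-1}$ for $n\ge\lambda$. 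Testing the displayed identity with $v=\Psi_n(T_k(u_n))$ (bounded, zero trace) and using $\Psi_n'+g_n\Psi_n=1$ on $\{u_n<k\}$, the left-hand side equals $\int_{\{u_n<k\}}|Du_n|+\Psi_n(k)\int_{\{u_n\ge k\}}g_n(u_n)|Du_n|\ge\int_{\{u_n<k\}}|Du_n|$, while the right-hand side is $\int_\Omega\Psi_n(T_k(u_n))h(u_n)f\le C\|f\|_{L^1(\Omega)}$: indeed, taking $k>s_2$, on $\{u_n\le s_2\}$ one has $\Psi_n(u_n)h(u_n)\le c_2u_n^{\,1-\gamma}\le c_2s_2^{\,1-\gamma}$ by $\Psi_n\le\mathrm{id}$, \eqref{h in zero} and $\gamma\le1$, and on $\{u_n>s_2\}$ one has $\Psi_n(T_k(u_n))h(u_n)\le C\,h_{s_2}(\infty)$ by \eqref{h g infinito}. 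Letting $k\to\infty$ gives $\int_\Omega|Du_n|\le C\|f\|_{L^N(\Omega)}=:C_1$, i.e. \eqref{u_n in BV}. The local bound on $h(u_n)f$ follows by testing \eqref{sol approx 1} with $0\le\varphi\in C^1_c(\Omega)$, $\varphi\equiv1$ on $\omega\subset\subset\Omega$, giving $\int_\omega h(u_n)f\le\int_\Omega z_n\cdot\nabla\varphi+\int_\Omega\varphi\,g_n(u_n)|Du_n|\le\|\nabla\varphi\|_{L^1(\Omega)}+C_2$. Finally, if $\|f\|_{L^N(\Omega)}<(\mathcal S_1h(\infty))^{-1}$, a Stampacchia argument gives the $L^\infty$ bound: testing with $T_m(G_k(u_n))$, dropping the nonnegative absorption term and using $(z_n,DG_k(u_n))=|DG_k(u_n)|$ with zero trace, one gets for $k\ge s_2$
\[
\int_\Omega|DT_m(G_k(u_n))|\le h_k(\infty)\int_{\{u_n>k\}}T_m(G_k(u_n))\,f\le\mathcal S_1\,h_k(\infty)\|f\|_{L^N(\Omega)}\int_\Omega|DT_m(G_k(u_n))|
\]
by Hölder's inequality and Theorem \ref{embedding}; since $h_k(\infty)\downarrow h(\infty)$, for $k$ large (independently of $n,m$) the factor is $<1$, forcing $T_m(G_k(u_n))\equiv0$ for every $m$, i.e. $\|u_n\|_{L^\infty(\Omega)}\le k$.

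The principal difficulty is the bound \eqref{u_n in BV}: one must recognize that $\int_\Omega h(u_n)f$ cannot be controlled globally, that the correct test function is the solution $\Psi_n$ of $\Psi_n'+g_n\Psi_n=1$, and then establish the \emph{$n$-uniform} boundedness of $\Psi_n$, which is precisely the step that forces hypothesis \eqref{g non nulla}; handling the pairings $(z_n,D\Phi(u_n))$ when $u_n$ is unbounded is a further, routine but careful, point to be dealt with through truncations.
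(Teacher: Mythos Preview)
Your proof is correct, but the route differs from the paper's in the two main estimates. For \eqref{Gamma_n BV} the paper tests with $\Gamma_n(T_k(u_n))$ and, crucially, \emph{uses the already-proven bound} \eqref{u_n in BV} to control the term $\int_{\{u_n>\bar s\}} h(u_n)f\,\Gamma_n(T_k(u_n))$ via H\"older and Theorem~\ref{embedding}; your choice $v=1-e^{-\Gamma_n(u_n)}$ collapses the left-hand side directly to $\int_\Omega g_n(u_n)|Du_n|$ and makes the estimate self-contained, which is arguably neater. For \eqref{u_n in BV}, however, your claim that ``a plain truncation does not suffice'' is not quite right: the paper simply tests with $T_1(u_n)$, so that the principal part yields $\int_{\{u_n\le 1\}}|Du_n|$, the absorption term yields $\int_{\{u_n>1\}} g_n(u_n)|Du_n|\ge \eta\int_{\{u_n>1\}}|Du_n|$ (this is where \eqref{g non nulla} together with the positivity and continuity of $g$ gives $\inf_{s\ge1}g(s)>0$), and the right-hand side $\int_\Omega h(u_n)f\,T_1(u_n)$ is controlled by $C\|f\|_{L^1(\Omega)}$ using only $\gamma\le 1$. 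Your ODE test function $\Psi_n$ solving $\Psi_n'+g_n\Psi_n=1$ is correct and is a nice device (and your use of \eqref{g non nulla} to bound $\Psi_n$ uniformly is the right idea), but it is more elaborate than what is needed here. The local $L^1$ bound on $h(u_n)f$ and the Stampacchia $L^\infty$ argument are essentially identical to the paper's.
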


\begin{proof}
We first show that $u_n$ is bounded in $BV(\Omega)$ with respect to $n \in \mathbb{N}$. Without losing \black generality, we can assume that $s_1=s_2=\overline{s}$ and $c_1=c_2=\overline{c}$, where  $s_1,s_2,c_1,c_2$ are the constants introduced in \eqref{g in zero} and \eqref{h in zero}. \black
\medskip
\\We choose $T_1(u_n)\in BV(\Omega)\cap L^\infty(\Omega)$ as a test function \black in \eqref{sol approx 1} which is an admissible choice thanks to Remark \ref{oss estensione}.
\\Initially we note that
\begin{equation}\label{I int}
	-\int_{\Omega} \operatorname{div} z_n \, T_1(u_n)\stackrel{\eqref{GG form}, \eqref{sol approx 3}}{=}\int_\Omega \left(z_n, DT_1(u_n)\right) \stackrel{\eqref{sol approx 2}}{=}\int_\Omega |DT_1(u_n)| \stackrel{\eqref{eq chain w j}}{=} \int_{\{u_n \le 1\}} |Du_n|,
\end{equation}
which is possible because $u_n$ is nonnegative. For the integral on the right-hand side, we deduce 
\begin{equation}\label{II int}
	\int_\Omega h(u_n)f T_1(u_n) \stackrel{\eqref{h in zero}}{\le} \int_{\{u_n \le \overline{s}\}}\frac{\overline{c}}{u_n^\gamma}fu_n + \int_{\{u_n > \overline{s}\}}h(u_n)f \stackrel{\eqref{h g infinito}}{\le} \left(\overline{c} \overline{s}^{1-\gamma} +h_{\overline{s}}(\infty)\right) \| f \|_{L^1(\Omega)},
\end{equation} 
where we highlight that the right-hand side is a constant independent of $n \in \mathbb{N}$. 
Putting together \eqref{I int} and \eqref{II int} in \eqref{sol approx 1} with $T_1(u_n)$ as a test function \black, we gain \begin{equation}\label{test T1}
	\int_{\{u_n \le 1\}} |Du_n| + \int_{\{u_n \le 1\}} g_n(u_n)|Du_n|u_n + \int_{\{u_n > 1\}} g_n(u_n)|Du_n| \le C.
\end{equation}
At this point we stress that as a consequence of \eqref{g non nulla}, we can assure that there exists $\eta>0$ such that $$\inf_{s \in (1,\infty)} g(s)>\eta>0,$$
so applying \eqref{h g infinito} in \eqref{test T1}, \black we can affirm that
\begin{equation*}
	\min\left\{1, \inf_{s \in (1, \infty)}g(s)\right\} \left(\int_{\{u_n \le 1\}}|Du_n|+\int_{\{u_n > 1\}}|Du_n|\right) \le C,
\end{equation*} 
hence, we have proven \eqref{u_n in BV}.
	\medskip
	\\Now we focus on proving that $\Gamma_n(u_n)$ is uniformly bounded with respect to $n \in \mathbb{N}$ in $BV(\Omega)$. First of all, we want to show \begin{equation}\label{parring gamma}
		(z_n, D\Gamma_n(T_k(u_n)))=|D\Gamma_n(T_k(u_n))| \quad \text{as measures in $\Omega$.}
	\end{equation}
	From the definition of $\lambda(z_n, \Gamma_n(T_k(u_n)),x)$ we know that
	$$(z_n, D\Gamma_n(T_k(u_n)))= \lambda(z_n, \Gamma_n(T_k(u_n)), x)|D\Gamma_n(T_k(u_n))| \quad \text{as measures in $\Omega$,}$$ by \cite[Proposition 4.5 (iii)]{CDC} for every $k>0$, we gain
	\begin{equation*}
		\lambda(z_n, D\Gamma_n(T_k(u_n)), x)=\lambda(z_n,DT_k(u_n),x)=1 \quad \text{$|D\Gamma_n(T_k(u_n)))|-$a.e.,}
	\end{equation*}
	where we recall that $\lambda(z_n,DT_k(u_n),x)=1$ $|DT_k(u_n)|-$a.e. from \eqref{sol approx 2}, hence we get \eqref{parring gamma}.
	\\Subsequently, we take $\Gamma_n(T_k(u_n))\in BV(\Omega) \cap L^\infty(\Omega)$ with $k> \overline{s}$ as a test function \black in the equation \eqref{sol approx 1}, which is an admissible choice by Remark \ref{oss estensione}, \black first we show that
	\begin{equation}\label{int III}
		-\int_{\Omega} \operatorname{div} z_n \, \Gamma_n(T_k(u_n))\stackrel{\eqref{GG form}, \eqref{sol approx 3}}{=}\int_\Omega \left(z_n, D\Gamma_n(T_k(u_n))\right) \stackrel{\eqref{parring gamma}}{=}\int_\Omega |D\Gamma_n(T_k(u_n))|.
	\end{equation}
	Hence putting \eqref{int III} in \eqref{sol approx 1} with $\Gamma_n(T_k(u_n))$ as a test function \black, we have
	\begin{eqnarray*}
		\int_{\Omega}|D\Gamma_n(T_k(u_n))| &\le& \int_{\Omega}h(u_n)f \Gamma_n(T_k(u_n)) \le \frac{\overline{c}^2}{1-\theta}\int_{\{u_n \le \overline{s}\}} u_n^{1-\theta-\gamma} f \\ &+& h_{\overline{s}}(\infty) g_{\overline{s}}(\infty)\|f\|_{L^N(\Omega)}\|u_n\|_{L^{\frac{N}{N-1}}(\Omega)} \stackrel{\eqref{f embedding}}{\le}   \frac{\overline{c}^2 \overline{s}^{1-\theta-\gamma}}{1-\theta}\|f\|_{L^1(\Omega)} \\&+&  \mathcal{S}_1 h_{\overline{s}}(\infty) g_{\overline{s}}(\infty)  \|u_n\|_{BV(\Omega)} \|f\|_{L^N(\Omega)}\stackrel{\eqref{u_n in BV}}{\le} C\|f\|_{L^N(\Omega)},
	\end{eqnarray*}
	where we have used \eqref{g in zero}, \eqref{h in zero}, \eqref{h g infinito} and Hölder's inequality.
	\\We point out that the previous integral is uniformly bounded with respect to $n \in \mathbb{N}$ and $k > \overline{s}$, since $0\le \gamma \le 1$, $0\le \theta <1$ and $\theta+\gamma \le 1$, hence through Lemma \ref{sci BV}, one can take $k \to \infty$ yielding to 			\begin{equation*}
		\int_{\Omega} |D \Gamma_n(u_n)| \le C\|f\|_{L^N(\Omega)},
	\end{equation*}
	therefore we get \eqref{Gamma_n BV} (recall that $u_n=0 \, \mathcal{H}^{N-1}-$a.e. on $\partial \Omega$ which is guaranteed in \eqref{sol approx 3}).
	\medskip
	\\Now we show that $h(u_n)f$ is uniformly bounded with respect to $n \in \mathbb{N}$ in $L^1(\omega)$ for every $\omega \subset \subset \Omega$. We choose $0 \le \varphi \in C^1_c(\Omega)$ such that $\varphi \equiv 1$ in $\omega$ as a test function in \eqref{sol approx 1}; then it follows from \eqref{Gamma_n BV} that
	\begin{equation}\label{h(u_n)f uniforme}
		\int_\omega h(u_n) f \varphi \le \int_\Omega z_n \cdot \nabla \varphi + \int_\Omega |D\Gamma_n(u_n)| \varphi \le \| \nabla \varphi\|_{L^1(\Omega)^N} + C\|f\|_{L^N(\Omega)} \| \varphi\|_{L^\infty(\Omega)},
	\end{equation} \black
	where we also used that $\|z_n\|_{L^\infty(\Omega)^N}\le 1$.
	\medskip
	\\Finally let us demonstrate that $u_n$ is uniformly bounded with respect to $n$ in $L^\infty(\Omega)$; we stress that the condition on the smallness of norm $\|f\|_{L^N(\Omega)}$ is crucial for this estimate.
	\\Let us take $G_k(T_r(u_n)) \in BV(\Omega) \cap L^\infty(\Omega)$ where $G_k$ is the function defined \eqref{def G k} with $r>k>\overline{s}$ \black as a test function \black in \eqref{sol approx 1}; it is an admissible choice due to Remark \ref{oss estensione}, we obtain
	\begin{equation}\label{mezza stima G_k}
		\int_\Omega (z_n, DG_k(T_r(u_n))) \le \int_{\Omega} h(u_n)f G_k(u_n) \le h_{k}(\infty) \black \, \|f\|_{L^N(\Omega)} \|G_k(u_n)\|_{L^{\frac{N}{N-1}}(\Omega)},
	\end{equation}
	where we also used \eqref{GG form} and \eqref{sol approx 3}.
	Through \cite[Proposition 4.5 (iii)]{CDC}, we deduce that
	\begin{equation*}
		\lambda(z_n, DG_k(T_r(u_n)), x)=\lambda(z_n, DT_r(u_n),x)=1 \quad \text{$|DG_k(T_r(u_n))|-$a.e.}
	\end{equation*}
	since $G_k$ is a non-decreasing function, hence we can affirm that
	\begin{equation}\label{Parring G_k}
		(z_n, DG_k(T_r(u_n))) = |DG_k(T_r(u_n))| \quad \text{as measures in $\Omega$,}
	\end{equation}
	because $|DG_k(T_r(u_n))|$ is an absolutely continuous measure \black with respect to $|DT_r(u_n)|$.
	\\Gathering \eqref{Parring G_k} into \eqref{mezza stima G_k}, it yields \black 
	\begin{equation*}
		\int_{\{k\le u_n \le r\}} |Du_n|=	\int_\Omega |DG_k(T_r(u_n))| \le  h_k(\infty) \black \|f\|_{L^N(\Omega)} \|G_k(u_n)\|_{L^{\frac{N}{N-1}}(\Omega)},
	\end{equation*}
	where we have applied Lemma \ref{chain rule}, since $G_k$, $T_r$ are Lipschitz functions and $D^ju_n = 0$. Due to the Lebesgue Theorem and Theorem \ref{embedding}, taking $r$ at infinity, we have 
	\begin{equation*}
		\mathcal{S}_1^{-1} \|G_k(u_n)\|_{L^\frac{N}{N-1}(\Omega)} \le h_k(\infty) \|f\|_{L^N(\Omega)} \|G_k(u_n)\|_{L^{\frac{N}{N-1}}(\Omega)}.
	\end{equation*} 
	Under the assumption that $\|f\|_{L^N(\Omega)} \mathcal{S}_1 h(\infty) <1$, we can pick $\overline{k}\in \mathbb{N}$ sufficiently large such that $\mathcal{S}_1^{-1} - \|f\|_{L^N(\Omega)} h_{\overline{k}}(\infty)>C>0$ for a constant $C$ which is independent of $n \in \mathbb{N}$. Therefore, we get 
	$$\|G_{\overline{k}}(u_n)\|_{L^{\frac{N}{N-1}}(\Omega)} \le 0,$$
	which means that 
	\begin{equation*}\label{u_n uniforme}
		\|u_n\|_{L^\infty(\Omega)} \le \overline{k},
	\end{equation*}
	where $\overline{k}>0$ is a constant independent of $n \in \mathbb{N}$. 
\end{proof}
\begin{corollary}\label{cor es}
	Under the assumptions of Lemma \ref{lemma stima}, there exists a nonnegative $u \in BV(\Omega)$ such that $u_n$ converges to $u$ (up to subsequence) in $L^q(\Omega)$ for every $q<\frac{N}{N-1}$ and $Du_n$ converges to $Du$ *$-$weakly as measures.
	\\Moreover $\Gamma(u) \in BV(\Omega)$ and $\Gamma_n(u_n)$ converges to $\Gamma(u)$ (up to subsequence) in $L^q(\Omega)$ for every $q < \frac{N}{N-1}$ and $D\Gamma_n(u_n)$ converges to \black $D\Gamma(u)$ *$-$weakly as measures.
	\\Finally if $\|f\|_{L^N(\Omega)}<(\mathcal{S}_1h(\infty))^{-1}$, then $u$ is also bounded.
\end{corollary}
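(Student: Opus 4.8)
The plan is to extract the limit function $u$ and all the stated convergences directly from the uniform $BV$ bounds of Lemma \ref{lemma stima} via the compactness result for $BV$-functions recalled in Section 2, and then to identify the limit of $\Gamma_n(u_n)$ with $\Gamma(u)$ through an almost-everywhere argument.

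First I would use \eqref{u_n in BV}: since $u_n$ is bounded in $BV(\Omega)$, there are a nonnegative $u\in BV(\Omega)$ and a subsequence (not relabelled) with $u_n\to u$ in $L^1(\Omega)$, $u_n\to u$ a.e. in $\Omega$, and $Du_n\rightharpoonup Du$ $*$-weakly in $\mathcal{M}(\Omega)$; nonnegativity of $u$ passes to the a.e. limit. The continuous embedding \eqref{f embedding} together with \eqref{u_n in BV} bounds $u_n$ in $L^{\frac{N}{N-1}}(\Omega)$, so $\{|u_n|^q\}$ is equi-integrable for every $q<\frac{N}{N-1}$ and Vitali's theorem upgrades the convergence to $u_n\to u$ in $L^q(\Omega)$ for all such $q$. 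Exactly the same argument applied to $\Gamma_n(u_n)$, which is bounded in $BV(\Omega)$ by \eqref{Gamma_n BV}, produces — along a further subsequence, again not relabelled — a function $v\in BV(\Omega)$ with $\Gamma_n(u_n)\to v$ in $L^q(\Omega)$ for every $q<\frac{N}{N-1}$, a.e. in $\Omega$, and $D\Gamma_n(u_n)\rightharpoonup Dv$ $*$-weakly in $\mathcal{M}(\Omega)$.

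The core of the argument is then to prove $v=\Gamma(u)$ a.e. I would first observe that $g_n=T_n(g)\uparrow g$ pointwise on $[0,\infty)$ and that $g\in L^1_{\rm{loc}}([0,\infty))$ thanks to \eqref{g in zero} with $\theta<1$, so by monotone convergence $\Gamma_n(s)\uparrow\Gamma(s)$ for every $s\ge 0$, with $\Gamma$ finite and continuous. Fixing $x$ with $u_n(x)\to u(x)$, I would split two cases: if $u(x)=0$ then $0\le\Gamma_n(u_n(x))\le\Gamma(u_n(x))\to\Gamma(0)=0=\Gamma(u(x))$; if $u(x)=a>0$ then $u_n(x),u(x)$ lie in $[a/2,2a]$ for $n$ large, $g$ is bounded there by some $M_a$, hence $g_n\le M_a$ on that interval and $|\Gamma_n(u_n(x))-\Gamma_n(u(x))|\le M_a|u_n(x)-u(x)|\to 0$, while $\Gamma_n(u(x))\to\Gamma(u(x))$, so $\Gamma_n(u_n(x))\to\Gamma(u(x))$. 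This gives $\Gamma_n(u_n)\to\Gamma(u)$ a.e., and comparing with $\Gamma_n(u_n)\to v$ a.e. yields $v=\Gamma(u)$; in particular $\Gamma(u)\in BV(\Omega)$ and all the asserted convergences for $\Gamma_n(u_n)$ follow. For the last statement, if $\|f\|_{L^N(\Omega)}<(\mathcal{S}_1 h(\infty))^{-1}$ the final claim of Lemma \ref{lemma stima} provides $\overline{k}$ independent of $n$ with $\|u_n\|_{L^\infty(\Omega)}\le\overline{k}$, and passing to the a.e. limit gives $\|u\|_{L^\infty(\Omega)}\le\overline{k}$.

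The step I expect to be the main obstacle is precisely the identification $v=\Gamma(u)$: the approximating primitives $\Gamma_n$ are only $n$-Lipschitz, not uniformly Lipschitz, so one cannot pass to the limit in $\Gamma_n(u_n)$ by a single global Lipschitz estimate. The argument must be localised away from $\{u=0\}$, where $g$ — hence $\Gamma_n'$ — is uniformly controlled on compact positive intervals, while the degenerate set $\{u=0\}$ is handled separately by the elementary bound $\Gamma_n\le\Gamma$ together with the continuity of $\Gamma$ at the origin, which is exactly where the hypothesis $\theta<1$ is used.
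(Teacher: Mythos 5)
Your proof is correct and follows the route the paper intends: the paper states this corollary without proof, as a direct consequence of the uniform bounds \eqref{u_n in BV}, \eqref{Gamma_n BV} of Lemma \ref{lemma stima} combined with the $BV$ compactness and embedding results of Section 2, which is exactly your argument. Your careful pointwise identification of the limit of $\Gamma_n(u_n)$ with $\Gamma(u)$ (using $\Gamma_n\uparrow\Gamma$, the bound \eqref{g in zero} with $\theta<1$ near the origin and \eqref{h g infinito} away from it) fills in correctly the step the paper leaves implicit.
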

\subsection{Identification of the vector field $z$}
Now we show the existence of the vector field $z$.
\begin{lemma}\label{lemma dis}
Under the assumptions of Lemma \ref{lemma stima}, it holds $h(u)f \in L^1_{\rm{loc}}(\Omega)$ and there exists $z \in \mathcal{DM}^{\infty}_{\rm{loc}}(\Omega)$ such that
\begin{equation}\label{= per Gamma e z}
	-\operatorname{div}\left(z \ensuremath{\mathrm e}^{-\Gamma(u)}\right)=h(u)f \ensuremath{\mathrm e}^{-\Gamma(u)} \, \text{as measures in $\Omega$,}
\end{equation}
and
\begin{equation}\label{dis per z}
	-\operatorname{div}z+|D\Gamma(u)| \le h(u)f \quad \text{as measures in $\Omega$,}
\end{equation}
Moreover $u>0$ a.e. in $\Omega$ and $D^j u = 0$.
\end{lemma}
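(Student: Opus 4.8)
The plan is to pass to the limit in the approximate equation \eqref{sol approx 1} exploiting the exponential reformulation, which kills the awkward first-order term. First I would multiply \eqref{sol approx 1} by $\mathrm e^{-\Gamma_n(u_n)}$: using the Leibniz rule \eqref{Leibniz} together with the chain rule \eqref{eq chain w j} (recall $D^j u_n=0$, so $D\Gamma_n(u_n)=g_n(u_n)\,\tilde D u_n$ and $|D\Gamma_n(u_n)|=g_n(u_n)|Du_n|$), one gets $-\operatorname{div}\!\big(z_n\mathrm e^{-\Gamma_n(u_n)}\big)=h(u_n)f\,\mathrm e^{-\Gamma_n(u_n)}$ as measures in $\Omega$. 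The point is that the vector fields $w_n:=z_n\mathrm e^{-\Gamma_n(u_n)}$ are uniformly bounded in $L^\infty$ (by $1$, since $0\le\Gamma_n\le\Gamma$ and $g\ge0$), so up to a subsequence $w_n\rightharpoonup w$ weakly-$*$ in $L^\infty(\Omega)^N$; by Corollary \ref{cor es}, $\Gamma_n(u_n)\to\Gamma(u)$ in $L^q$ and a.e.\ (up to subsequence), so $\mathrm e^{-\Gamma_n(u_n)}\to\mathrm e^{-\Gamma(u)}$ strongly in every $L^q$, $q<\infty$, and hence $z_n=w_n\mathrm e^{\Gamma_n(u_n)}$ converges weakly-$*$ to some $z$ with $w=z\mathrm e^{-\Gamma(u)}$ and $\|z\|_{L^\infty(\Omega)^N}\le1$.

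Next I would identify the right-hand side. From the local bound on $h(u_n)f$ in $L^1(\omega)$ (Lemma \ref{lemma stima}) and Fatou's lemma applied along the a.e.\ convergence $u_n\to u$ and continuity of $h$ on $(0,\infty)$, one obtains $h(u)f\in L^1_{\rm loc}(\Omega)$; one should first check $u>0$ a.e.\ so that $h(u)$ is defined a.e. This positivity follows by testing \eqref{sol approx 1} (or rather its exponential form) against a suitable function and using $h(0)>0$: more precisely, by choosing test functions $V_\delta(u_n)\varphi$ and letting $\delta\to0$ one shows $\{u=0\}\subseteq\{f=0\}$, and since $f>0$ this gives $u>0$ a.e.\ — this is the argument already sketched in Remark \ref{rem post def}. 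Then passing to the limit in $-\operatorname{div}(w_n)=h(u_n)f\mathrm e^{-\Gamma_n(u_n)}$ tested against $\varphi\in C^1_c(\Omega)$: the left side converges by weak-$*$ convergence of $w_n$, and for the right side $h(u_n)f\mathrm e^{-\Gamma_n(u_n)}\to h(u)f\mathrm e^{-\Gamma(u)}$ needs an equi-integrability or generalized Fatou/Vitali argument on compact subsets — this yields \eqref{= per Gamma e z}, and Lemma \ref{lemma estensione} (with $\sigma=0$) then gives $\operatorname{div}w\in\mathcal M_{\rm loc}$, i.e.\ $z\mathrm e^{-\Gamma(u)}\in\mathcal{DM}^\infty_{\rm loc}(\Omega)$; undoing the exponential via \eqref{Leibniz} shows $z\in\mathcal{DM}^\infty_{\rm loc}(\Omega)$ as well.

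For the inequality \eqref{dis per z}, I would expand $-\operatorname{div}(z\mathrm e^{-\Gamma(u)})=h(u)f\mathrm e^{-\Gamma(u)}$ by the Leibniz rule \eqref{Leibniz}: $\mathrm e^{-\Gamma(u)}(-\operatorname{div}z)-(z,D\mathrm e^{-\Gamma(u)})=h(u)f\mathrm e^{-\Gamma(u)}$, hence $-\operatorname{div}z=h(u)f-\mathrm e^{\Gamma(u)}(z,D\mathrm e^{-\Gamma(u)})$. By the chain rule for pairings (cf.\ \eqref{mis con grat}, \eqref{dis mis con grat}) and $D^j u=0$ — which one must establish here, using that $\Gamma$ is strictly increasing since $g>0$, together with $\Gamma(u)\in BV(\Omega)$ from Corollary \ref{cor es} — we have $(z,D\mathrm e^{-\Gamma(u)})=-\mathrm e^{-\Gamma(u)}g(u)\lambda(z,D\Gamma(u),x)|D\Gamma(u)|$, so $\mathrm e^{\Gamma(u)}(z,D\mathrm e^{-\Gamma(u)})=-\lambda(z,D\Gamma(u),x)|D\Gamma(u)|$; inserting and using $|\lambda|\le1$ (from $\|z\|_\infty\le1$ and \eqref{finitetotal1}) gives $-\operatorname{div}z=h(u)f+\lambda(z,D\Gamma(u),x)|D\Gamma(u)|\ge h(u)f-|D\Gamma(u)|$, which rearranges to \eqref{dis per z}.

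The main obstacle I expect is the passage to the limit in the right-hand side $h(u_n)f\mathrm e^{-\Gamma_n(u_n)}$: away from $\{u=0\}$ one has pointwise convergence and a local $L^1$ bound, but near $\{u=0\}$, where $h$ may blow up, one needs the truncation-and-$V_\delta$ machinery together with the uniform estimate on $h(u_n)f$ in $L^1_{\rm loc}$ (and the structural inequality $\theta+\gamma\le1$) to rule out concentration; this is where the proof of $u>0$ a.e.\ and of $D^ju=0$ must be interwoven, since the latter is what makes the chain-rule computations for $\Gamma(u)$ legitimate.
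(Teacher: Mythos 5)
Your skeleton (exponential reformulation of the approximate equation, weak-$*$ extraction of $z$, splitting the right-hand side over $\{u_n\le\delta\}$ and $\{u_n>\delta\}$) is the same as the paper's, but the two steps you leave as "expected obstacles" are precisely where the lemma lives, and the tools you name for them would not close the argument. For the limit of $h(u_n)f\,e^{-\Gamma_n(u_n)}$ near the origin, neither equi-integrability nor a Vitali-type argument is available: all you have is a uniform $L^1_{\rm loc}$ bound on $h(u_n)f$, which does not prevent concentration on $\{u_n\le\delta\}$. The paper's actual device is to take $V_\delta(u_n)e^{-\Gamma_n(u_n)}\varphi$ as a test function in \eqref{sol approx 1}, use the identity $\left(z_n,D\left(-e^{-\Gamma_n(u_n)}\right)\right)=|De^{-\Gamma_n(u_n)}|$ (from \cite{BOP}) to cancel the first-order term, and the sign $V_\delta'\le 0$ together with \eqref{sol approx 2} to discard the remaining pairing; this gives $\int_{\{u_n\le\delta\}}h(u_n)f e^{-\Gamma_n(u_n)}\varphi\le\int_\Omega z_n\cdot\nabla\varphi\, e^{-\Gamma_n(u_n)}V_\delta(u_n)$, whose double limit vanishes because $u>0$ a.e. You gesture at "the $V_\delta$ machinery" but never run it, so \eqref{= per Gamma e z}, and the corresponding right-hand side limit needed for \eqref{dis per z} (cf.\ \eqref{h(u)f che mi serve}), are not established. (Minor: extract $z$ directly as the weak-$*$ limit of $z_n$, which is bounded by $1$; recovering it as $w_n e^{\Gamma_n(u_n)}$ is not justified, since $e^{\Gamma_n(u_n)}$ is unbounded.)

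Two further genuine gaps. Your route to \eqref{dis per z} is circular and uses illegitimate manipulations: expanding $-\operatorname{div}\left(z e^{-\Gamma(u)}\right)$ by \eqref{Leibniz} presupposes $\operatorname{div}z\in\mathcal{M}_{\rm loc}(\Omega)$, which is exactly what \eqref{dis per z} is supposed to deliver, and "undoing the exponential" requires pairing against the factor $e^{\Gamma(u)}$, which is in general unbounded (and need not even be $BV$) because $u\in BV(\Omega)$ is not bounded unless the smallness condition on $\|f\|_{L^N(\Omega)}$ holds; the Leibniz and pairing rules you invoke are stated for $BV\cap L^\infty$. The paper instead proves \eqref{dis per z} directly by passing to the limit in \eqref{sol approx 1}: weak-$*$ convergence of $z_n$ for the divergence term, lower semicontinuity of $\int_\Omega|D\Gamma_n(u_n)|\varphi$ via Lemma \ref{sci BV} and Corollary \ref{cor es}, and the same near-zero control on the right-hand side; only afterwards does $z\in\mathcal{DM}^\infty_{\rm loc}(\Omega)$ follow. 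Finally, your justification of $D^ju=0$ ("$\Gamma$ strictly increasing and $\Gamma(u)\in BV(\Omega)$") proves nothing beyond $J_u=J_{\Gamma(u)}$: the absence of the jump part is a regularizing effect of the equation, obtained in the paper by combining \eqref{= per Gamma e z} and \eqref{dis per z} in the chain \eqref{catena di dis}, using the $\#$-pairing \eqref{mis con grat} with $\beta(s)=-e^{-s}$ and then \cite[Lemma 2.3]{BOP}. Note also that your own derivation of \eqref{dis per z} already assumes $D^ju=0$, so the order of steps you propose cannot be carried out; in the paper \eqref{dis per z} comes first and $D^ju=0$ last.
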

\begin{proof}
Let us observe that $h(u)f \in L^1_{\rm{loc}}(\Omega)$ simply follows from \eqref{h(u_n)f uniforme} after an application of the Fatou Lemma.
\\Firstly we consider $z_n \in \mathcal{DM}^\infty(\Omega)$, the vector field which plays the role of $\frac{Du_n}{|Du_n|}$. We remember that $z_n$ is uniformly bounded with respect to $n$ in $L^\infty(\Omega)^N$, since $\|z_n\|_{L^\infty(\Omega)^N} \le 1$.  \\Therefore this is sufficient to deduce the existence of $z \in L^\infty(\Omega)^N$ such that, up to subsequences, $z_n$ converges *$-$weakly in $L^\infty(\Omega)^N$ to $z$, as $n$ tends to infinity; in particular by weak lower semi-continuity, it holds
$\|z\|_{L^\infty(\Omega)^N} \le 1$.	
\medskip
\\Let us focus on \eqref{= per Gamma e z}. It follows from \cite[Proposition 4.3]{BOP} that \begin{equation}\label{= per z_n e Gamma_n}
	- \operatorname{div}\left(z_n \ensuremath{\mathrm e}^{- \Gamma_n(u_n)}\right)=h(u_n)f \ensuremath{\mathrm e}^{-\Gamma_n(u_n)} \, \text{as measures in $\Omega$,}
\end{equation}
as a consequence, \eqref{= per Gamma e z} is shown once we have taken $n$ to infinity in \eqref{= per z_n e Gamma_n}.
\\We pick $0\le\varphi \in C^1_c(\Omega)$ as a test function \black in \eqref{= per z_n e Gamma_n}, for the left-hand side we have
\begin{equation}\label{lim z_n e Gamma_n}
	\lim_{n \to \infty}\int_{\Omega} z_n \cdot \nabla \varphi \, \ensuremath{\mathrm e}^{-\Gamma_n(u_n)} = \int_{\Omega} z \cdot \nabla \varphi \,\ensuremath{\mathrm e}^{-\Gamma(u)},  
\end{equation}
which simply follows since $z_n \rightharpoonup z$ *$-$weakly in $L^\infty(\Omega)^N$ and $\ensuremath{\mathrm e}^{-\Gamma_n(u_n)} \to \ensuremath{\mathrm e}^{-\Gamma(u)}$ strongly in $L^1(\Omega)$ through the Lebesgue Theorem.
Now it remains to show that 
\begin{equation}\label{lim h(u_n)f Gamma_n}
	\lim_{n \to \infty} \int_{\Omega} h(u_n)f \ensuremath{\mathrm e}^{-\Gamma_n(u_n)} \varphi = \int_{\Omega} h(u)f \ensuremath{\mathrm e}^{-\Gamma(u)} \varphi.
\end{equation}
\medskip

We distinguish two cases: if $h$ is finite at the origin then the passage to limit is a simple consequence of the Lebesgue Theorem. Hence, without losing \black generality, we assume $h(0)=\infty$. \\We split the integral as follows \black
\begin{equation}\label{lim diviso}
	\int_{\Omega} h(u_n)f \ensuremath{\mathrm e}^{-\Gamma_n(u_n)} \varphi = \int_{\{u_n \le \delta\}}  h(u_n)f \ensuremath{\mathrm e}^{-\Gamma_n(u_n)} \varphi + \int_{\{u_n > \delta\}}  h(u_n)f \ensuremath{\mathrm e}^{-\Gamma_n(u_n)} \varphi,
\end{equation}
where $\delta \not \in \{\eta \, : \, |\{u=\eta\}|>0\}$ which is at most a countable set.
\\We highlight that $u>0$ almost everywhere in $\Omega$ since $h(0)=\infty$ and $h(u)f \in L^1_{\rm{loc}}(\Omega)$, hence $\{u=0\} \subseteq \{f=0\}$ which is Lebesgue negligible because $f>0$ a.e.
\\Applying twice the Lebesgue Theorem on the second term of right-hand side of \eqref{lim diviso},we gain
\begin{equation}\label{lim delta>0}
	\lim_{\delta \to 0} \lim_{n \to \infty} \int_{\{u_n > \delta\}} h(u_n)f \ensuremath{\mathrm e}^{-\Gamma_n(u_n)} \varphi = \lim_{\delta \to 0} \int_{\{u >\delta\}} h(u)f \ensuremath{\mathrm e}^{-\Gamma(u)} \varphi \stackrel{u>0}{=} \int_{\Omega}h(u)f \ensuremath{\mathrm e}^{-\Gamma(u)}\varphi,
\end{equation}
since $$ \chi_{\{u_n >\delta\}}h(u_n)f \ensuremath{\mathrm e}^{-\Gamma_n(u_n)} \varphi \le h_\delta(\infty)f \varphi \in L^1(\Omega),
$$
and $$ \chi_{\{u>\delta\}} h(u)f \ensuremath{\mathrm e}^{-\Gamma(u)} \varphi \le h(u)f \varphi \in L^1(\Omega).
$$
Now we prove \begin{equation}\label{lim del<0}
	\lim_{\delta \to 0} \limsup_{n \to \infty} \int_{\{u_n \le \delta\}}  h(u_n)f \ensuremath{\mathrm e}^{-\Gamma_n(u_n)} \varphi =0,
\end{equation}
which yields \black \eqref{lim h(u_n)f Gamma_n} in virtue of \eqref{lim delta>0}.
\\We fix $V_\delta(u_n) \ensuremath{\mathrm e}^{-\Gamma_n(u_n)}  \varphi\in BV(\Omega) \cap L^\infty(\Omega)$ with $0 \le \varphi \in C^1_c(\Omega)$ and $V_\delta$ is the truncation function defined in \eqref{def v d} as a test function \black in \eqref{sol approx 1}. It is an admissible choice since $\ensuremath{\mathrm e}^{-\Gamma_n(u_n)}, V_\delta(u_n)  \in BV(\Omega)\cap L^\infty(\Omega)$ and $BV(\Omega)\cap L^\infty(\Omega)$ is an algebra (see for instance \cite[Remark 3.10]{AFP}). We obtain \begin{equation}\label{tante test}
	\begin{aligned}
		&\int_\Omega (z_n, DV_\delta(u_n)) \ensuremath{\mathrm e}^{-\Gamma_n(u_n)} \varphi+ \int_\Omega \left(z_n, D\ensuremath{\mathrm e}^{-\Gamma_n(u_n)}\right) V_\delta(u_n) \varphi + \int_\Omega z_n \cdot \nabla \varphi \, \ensuremath{\mathrm e}^{-\Gamma_n(u_n)}V_\delta(u_n) \\ + &\int_\Omega g_n(u_n)|Du_n| \ensuremath{\mathrm e}^{-\Gamma_n(u_n)} V_\delta(u_n) \varphi = \int_\Omega h(u_n) f \ensuremath{\mathrm e}^{-\Gamma_n(u_n)} V_\delta(u_n) \varphi,
	\end{aligned}
\end{equation}
where we also use \eqref{GG form}, \eqref{der prod} and \eqref{sol approx 3}.
\\We underline that for a solution $u_n$ of the problem \eqref{prob approx} found in \cite[Theorem 4.4]{BOP}  by virtue of \cite[Lemma 4.7]{BOP}, it holds
\begin{equation*}
	\left(z_n, D\left(-\ensuremath{\mathrm e}^{-\Gamma_n(u_n)}\right)\right)=|D\ensuremath{\mathrm e}^{-\Gamma_n(u_n)}| \quad \text{as measures in $\Omega$}.
\end{equation*}
As direct consequence of the previous equality and the fact that $D^j u_n =0$, it yields \black 
\begin{equation}\label{catena con g_n}
	\int_\Omega \left(z_n, D\ensuremath{\mathrm e}^{-\Gamma_n(u_n)}\right) V_\delta(u_n) \varphi = - \int_{\Omega} |D\Gamma_n(u_n)| \ensuremath{\mathrm e}^{-\Gamma_n(u_n)} V_\delta(u_n) \varphi.
\end{equation}
Moreover by \eqref{sol approx 2} and $D^j u_n=0$, we get
\begin{equation}\label{V' <0}
	\begin{aligned}
		&\int_\Omega (z_n, DV_\delta(u_n)) \ensuremath{\mathrm e}^{-\Gamma_n(u_n)} \varphi = \int_{\Omega} V'_\delta(u_n) \left(z_n, DT_{2 \delta}(u_n)\right) \ensuremath{\mathrm e}^{-\Gamma_n(u_n)} \varphi = \int_{\Omega}V'_\delta(u_n) |DT_{2 \delta}(u_n)| \ensuremath{\mathrm e}^{-\Gamma_n(u_n)} \varphi \le 0,
	\end{aligned}
\end{equation}
where we also use Lemma \ref{chain rule} and \eqref{GG form}.
\\Putting \eqref{catena con g_n} and \eqref{V' <0} in \eqref{tante test}, we have
\begin{equation*}
	\int_{\{u_n \le \delta\}} h(u_n)f \ensuremath{\mathrm e}^{-\Gamma_n(u_n)} \varphi \le \int_\Omega z_n \cdot \nabla \varphi \, \ensuremath{\mathrm e}^{-\Gamma_n(u_n)} V_\delta(u_n).
\end{equation*}
Through the *$-$weak convergence of $z_n \rightharpoonup z$ in $L^\infty(\Omega)^N$ and the Lebesgue Theorem, it follows that
\begin{equation*}
	\lim_{\delta \to 0} \limsup_{n \to \infty} \int_{\{u_n \le \delta\}} h(u_n)f \ensuremath{\mathrm e}^{-\Gamma_n(u_n)} \varphi \le \lim_{\delta \to 0} \lim_{n \to \infty} \int_\Omega z_n \cdot \nabla \varphi \ensuremath{\mathrm e}^{-\Gamma_n(u_n)} V_\delta(u_n)\le \int_{\{u=0\}} z \cdot \nabla \varphi \ensuremath{\mathrm e}^{-\Gamma(u)} \stackrel{u>0}{=}0,
\end{equation*}
which demonstrates \eqref{lim del<0}.
Moreover putting together \eqref{lim z_n e Gamma_n} and \eqref{lim h(u_n)f Gamma_n}, it yields \black  \eqref{= per Gamma e z}.
\medskip
\\Now we show \eqref{dis per z}, it is enough to take limit as $n$ tends to infinity in \eqref{sol approx 1}. Firstly we analyze the left-hand side, the first integral take limit due to the *$-$weak convergence of the vector field $z_n$. For the second one, we have $$\int_{\Omega} |D\Gamma(u)| \varphi \le \liminf_{n \to \infty} \int_{\Omega} |D\Gamma_n(u_n)| \varphi, \quad \text{for all $0 \le\varphi \in C^1_c(\Omega)$,}$$ due to Lemma \ref{lemma stima} and Lemma \ref{sci BV}.
\\For the right-hand side we reason analogously as in the proof of \black \eqref{lim h(u_n)f Gamma_n}, indeed we have that $(\delta < 1)$ 
\begin{equation}\label{h(u)f che mi serve}
	\lim_{\delta \to 0} \limsup_{n \to \infty} \int_{\{u_n \le \delta\}} h(u_n)f \varphi \le \lim_{\delta \to 0} \limsup_{n \to \infty} \frac{1}{\ensuremath{\mathrm e}^{-\Gamma(1)}} \int_{\{u_n \le \delta\}} h(u_n)f \ensuremath{\mathrm e}^{-\Gamma_n(u_n)} \varphi=0,
\end{equation}
this concludes the proof of \eqref{dis per z}.
\\As consequence of \eqref{dis per z}, it gives that $z \in \mathcal{DM}^{\infty}_{\rm{loc}}(\Omega)$.
\medskip 
\\Finally, we prove $D^ju =0$.  We highlight that $J_{\Gamma(u)}=J_u$ because $\Gamma$ is an increasing function since $g$ is positive, in particular we use the pairing introduced in \eqref{mis con grat} with $\beta(s)=-\ensuremath{\mathrm e}^{-s}$ and $v=\Gamma(u)$, we obtain 
\begin{equation}\label{catena di dis}
	\begin{aligned}
		\left(\ensuremath{\mathrm e}^{-\Gamma(u)}\right)^{\#} |D \Gamma (u)| &\stackrel{\eqref{dis per z}}{\le} \ensuremath{\mathrm e}^{- \Gamma(u)} h(u)f + \left(\ensuremath{\mathrm e}^{- \Gamma(u)}\right)^{\#} \operatorname{div}z \stackrel{\eqref{= per Gamma e z}}{=}  - \operatorname{div}\left(z \ensuremath{\mathrm e}^{- \Gamma(u)}\right) + \left(\ensuremath{\mathrm e}^{- \Gamma(u)} \right)^{\#} \operatorname{div}z \\&\stackrel{\eqref{dist1}}{=}\left(z, D \left(-\ensuremath{\mathrm e}^{-\Gamma(u)}\right)^{\#}\right) \stackrel{\eqref{finitetotal1}}{\le} \left|D \left(-\ensuremath{\mathrm e}^{-\Gamma(u)}\right)\right| = \left(\ensuremath{\mathrm e}^{-\Gamma(u)}\right)^{\#}|D \Gamma(u)| \quad \text{in $\mathcal{D}'(\Omega)$,}
	\end{aligned}
\end{equation}
where the last equality is a consequence of Lemma \ref{chain rule}.
Therefore, the previous implies
$$\left(z, D \left(-\ensuremath{\mathrm e}^{-\Gamma(u)}\right)^\#\right)=\left|D\left(-\ensuremath{\mathrm e}^{-\Gamma(u)}\right)\right| \quad \text{as measures in $\Omega$}.$$
As a direct consequence of the previous equality we can deduce that $D^j \Gamma(u) =0$ through \cite[Lemma 2.3]{BOP} with $\alpha(s)=s$, $\beta(s)=- \ensuremath{\mathrm e}^{-s}$, $w=\Gamma(u)$ and \eqref{dis per z}. \\Especially, we can affirm also that $D^j u=0$, because $\Gamma$ is increasing since $g$ is positive.
\end{proof}
\subsection{Proof of the  main existence result}
Let us prove Theorem \ref{teo princ} as a consequence of the previous Lemmas.

\begin{proof} [Proof of Theorem \ref{teo princ}]
Let us consider $u_n$ a solution of the problem \eqref{prob approx} given in \cite[Theorem 4.4]{BOP}.
\\From Corollary \ref{cor es}, one has that there exists $u \in BV(\Omega)$ such that, up to subsequence, $u_n$ converges almost everywhere in $\Omega$ at $u$ as $n$ tends to infinity. Moreover in Lemma \ref{lemma dis} we found that $h(u)f \in L^1_{\rm{loc}}(\Omega)$ and $D^j u=0$. In addition, in the same lemma, we proved the existence of a vector field $z \in \mathcal{DM}^{\infty}_{\rm{loc}}(\Omega)$ such that $\|z\|_{L^\infty(\Omega)^N}\le 1$.
\medskip
\\To complete the proof, we verify that $u$ and $z$ satisfy \eqref{sol 1}, \eqref{sol 2}, \eqref{sol 3} and $g(u) \in L^1_{\rm{loc}}(\Omega, |Du|)$.
\\Let us focus on proving \eqref{sol 1}, in \eqref{catena di dis} all the inequalities become equalities, consequently we conclude that
\begin{equation*}
	\ensuremath{\mathrm e}^{-\Gamma(u)} \left(- \operatorname{div}z+|D\Gamma(u)|\right)=\ensuremath{\mathrm e}^{-\Gamma(u)}h(u)f \quad \text{in $\mathcal{D}'(\Omega)$,}
\end{equation*}
recalling that $\ensuremath{\mathrm e}^{-\Gamma(u)}>0$ $\mathcal{H}^{N-1}$-a.e. $x \in \Omega$ thanks to \eqref{BV finita} because $\Gamma(u) \in BV(\Omega)$, as proven in Corollary \ref{cor es},  we can deduce 
\begin{equation}\label{quasi sol 1}
	- \operatorname{div}z+|D\Gamma(u)|=h(u)f \quad \text{in $\mathcal{D}'(\Omega)$.}
\end{equation}
\black
Now using \cite[Lemma 2.4]{GOP}, we can affirm that
$$\chi_{\{u>0\}}|D\Gamma(u)|=g(u)\chi_{\{u>0\}}|Du| \quad \text{as measures in $\Omega$,}$$
by Lemma \ref{lemma dis} and \cite[Remark 2.5]{GOP}, we deduce
\begin{equation}\label{spac Gamma}
	|D\Gamma(u)|=g(u)|Du| \quad \text{as measures in $\Omega$,}
\end{equation}
which implies that $g(u) \in L^1(\Omega, |Du|)$.
\\Putting together \eqref{quasi sol 1} and \eqref{spac Gamma}, we can conclude that holds \eqref{sol 1}. We highlight that $z \in \DM(\Omega)$ since it holds Lemma \ref{lemma estensione}.
\medskip
\\Our next goal is proving \eqref{sol 2}.
We take $T_k(u_n) \varphi \in BV(\Omega) \cap L^\infty(\Omega)$ as a test function in \eqref{sol approx 1} with $0\le \varphi \in C^1_c(\Omega)$, which is an admissible choice as stated in Remark \ref{oss estensione}, we have
\begin{equation}\label{verso sol 2}
	\int_\Omega |DT_k(u_n)| \varphi + \int_\Omega z_n \cdot \nabla \varphi T_k(u_n) + \int_\Omega g_n(u_n)|Du_n|T_k(u_n)\varphi= \int_\Omega h(u_n)f T_k(u_n) \varphi,
\end{equation}
where we use \eqref{Leibniz} and \eqref{sol approx 2}.
\\To reach our objective, we take limit as $n$ tends to infinity in \eqref{verso sol 2}. We name $$\tilde{\Gamma}_n(s):=\int_0^s T_k(\sigma) g_n(\sigma) \, \ensuremath{\mathrm d}\sigma,$$ hence the first and the third integral on left hand side of \eqref{verso sol 2}, pass to limit as $n \to \infty$ by Lemma \ref{sci BV} (we underline that $T_k(u_n) \to T_k(u)$ and $\tilde{\Gamma}_n(u_n) \to \tilde{\Gamma}(u)$ strongly in $L^1(\Omega)$ as a consequence of Corollary \ref{cor es}), instead the second integral take limit as $n \to \infty$ because $z_n \rightharpoonup z$ $*$-weakly in $L^\infty(\Omega)^N$ as stated in Lemma \ref{lemma dis} and $T_k(u_n) \to T_k(u)$ strongly in $L^1(\Omega)$ as proven in Corollary \ref{cor es}.
\\Finally, for the right hand side we reason analogously as in the proof of \eqref{lim h(u_n)f Gamma_n}, indeed we have that $(\delta<1)$
\begin{equation*}
	\lim_{\delta \to 0} \limsup_{n \to \infty} \int_{\{u_n\le \delta\}} h(u_n)f T_k(u_n) \varphi \le k \lim_{\delta \to 0} \limsup_{n \to \infty} \int_{\{u_n\le \delta\}} h(u_n)f  \varphi \stackrel{\eqref{h(u)f che mi serve}}{=}0.
\end{equation*}
Hence, \eqref{verso sol 2} becomes
\begin{equation}\label{quasi sol 222}
	\int_\Omega |DT_k(u)| \varphi + \int_\Omega z \cdot \nabla \varphi T_k(u) + \int_\Omega g(u)|Du|T_k(u)\varphi \le \int_\Omega h(u)f T_k(u)\varphi,
\end{equation}
where we use $$|D\tilde{\Gamma}(u)|=g(u)|Du|T_k(u) \quad \text{as measures in $\Omega$,}$$ as a consequence of \cite[Remark 2.5]{GOP}, $|\{u=0\}|=0$ and $D^ju=0$.
\\Recall Remark \ref{oss estensione}, we can expand the space of test function of \eqref{sol 1} to $BV(\Omega)\cap L^\infty(\Omega)$, hence from \eqref{quasi sol 222}, we can deduce that
\begin{equation*}
	\int_\Omega |DT_k(u)|\varphi \le -\int_\Omega z \cdot \nabla \varphi T_k(u)  - \int_\Omega T_k(u) \varphi \operatorname{div}z \stackrel{\eqref{dist1}}{=} \int_\Omega \left(z, DT_k(u)\right) \varphi,
\end{equation*} 
which means \begin{equation*}
	|DT_k(u)| \le (z,DT_k(u)) \quad \text{as measures in $\Omega$, for any $k>0$,}
\end{equation*}
and, being the reverse inequality trivial, this proves \eqref{sol 2}. \black

	\medskip
	Now it remains to show \eqref{sol 3}. In \cite[Lemma 4.9]{BOP} it is proven, for every $k>0$, that
	\begin{equation}\label{= al bordo}
		\int_{\partial \Omega} \left([T_k(u_n) z_n, \nu]+T_k(u_n)\right) \, \ensuremath{\mathrm d}\mathcal{H}^{N-1} + \int_{\partial \Omega} \tilde{\Gamma}_n(u_n) \, \ensuremath{\mathrm d}\mathcal{H}^{N-1}=0.
	\end{equation}
	
	In particular, it follows from \eqref{des3} and $\|z_n\|_{L^\infty(\Omega)^N}\le 1$ that
	\begin{equation}\label{dis al bordo}
		|[T_k(u_n) z_n, \nu]| \le T_k(u_n) \, \text{on $\partial \Omega$.}
	\end{equation}
	Therefore gathering \eqref{dis al bordo} into \eqref{= al bordo}, one yields \black \begin{equation*}
		\int_{\partial \Omega} \tilde{\Gamma}_n(u_n) \, \ensuremath{\mathrm d} \mathcal{H}^{N-1} = 0,
	\end{equation*}
	using the Fatou Lemma we gain 
	\begin{equation*}
		\int_{\partial \Omega} \tilde{\Gamma}(u) \, \ensuremath{\mathrm d} \mathcal{H}^{N-1} \le 0,
	\end{equation*} 
	so $\tilde{\Gamma}(u) = 0$ $\mathcal{H}^{N-1}-$a.e. on $\partial \Omega$, as consequence it gives \eqref{sol 3}. 
	\medskip
	\\Finally if $\|f\|_{L^N(\Omega)} < (\mathcal{S}_1 h(\infty))^{-1}$, from Corollary \ref{cor es}, we can deduce that $u \in L^\infty(\Omega)$. This concludes the proof.
\end{proof}
\section{The case of a nonnegative $f \in L^N(\Omega)$}
In this section we extend to the case of a nonnegative datum $f \in L^N(\Omega)$. Let consider
\begin{equation}\label{f nonne}
	\begin{cases}
		-\Delta_1 u +g(u)|Du|=h(u)f & \text{in $\Omega$,} \\
		u=0 & \text{on $\partial \Omega$,}
	\end{cases}
\end{equation}
with $g,h$ satisfying \eqref{g in zero} - \eqref{h g infinito}; in particular we assume that $h(0)=\infty$.
\medskip
\\In this case, as the solution is not expected to be positive, the notion of solution needs to be suitably adapted.
\begin{defin}\label{sol nonne}
	Let $0 \le f \in L^N(\Omega)$. A nonnegative $u \in BV(\Omega)$ is a solution to the problem \eqref{f nonne} if $\chi_{\{u>0\}} \in BV_{\rm{loc}}(\Omega)$, $D^ju=0$, $\chi_{\{u>0\}}g(u) \in L^1_{\rm{loc}}(\Omega, |Du|)$, $h(u)f \in L^1_{\rm{loc}}(\Omega)$, and there exists $z \in \DM_{\rm{loc}}(\Omega)$ with $\|z\|_{L^\infty(\Omega)^N}\le 1$ such that
	\begin{equation}\label{+ sol 1}
		-\chi_{\{u>0\}}^* \operatorname{div}z + \chi_{\{u>0\}}g(u)|Du|=h(u)f \quad \text{as measures in $\Omega$,}
	\end{equation}
	\begin{equation}\label{+ sol 2}
		(z, DT_k(u))=|DT_k(u)| \quad \text{in $\mathcal{D}'(\Omega)$ for any $k>0$,}
	\end{equation}
	and
	\begin{equation}\label{+ sol 3}
		u(x)=0 \quad \text{for $\mathcal{H}^{N-1}-$a.e. in $\partial \Omega$.}
	\end{equation}
\end{defin}
\begin{remark}
	Let us highlight the main difference with respect to Definition \ref{sol}, i.e. the presence of the characteristic function $\chi_{\{u>0\}}$ in \eqref{+ sol 1}, because we cannot deduce that $u>0$ a.e. in $\Omega$ from $h(u)f \in L^1_{\rm{loc}}(\Omega)$ because the set $\{f =0\}$ is not Lebesgue negligible.				
\end{remark}
\begin{theorem}
	Let $g$ be positive and satisfying \eqref{g in zero}, \eqref{g non nulla} and \eqref{h g infinito}. Let $h$ satisfy \eqref{h in zero} and \eqref{h g infinito} such that $0 \le \theta < 1$, $0 \le \gamma \le 1$ and $\theta + \gamma \le 1$ and let $0 \le f \in L^N(\Omega)$. Then there exists a solution to the problem \eqref{f nonne} in the sense of Definition \ref{sol nonne}.
	\\Moreover if $\|f\|_{L^N(\Omega)} < (\mathcal{S}_1 h(\infty))^{-1}$, then $u \in L^\infty(\Omega)$.
\end{theorem}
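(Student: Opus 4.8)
The strategy is to approximate the datum from above by strictly positive $L^N$ functions, apply Theorem~\ref{teo princ}, and pass to the limit; the two genuinely new points are that $\chi_{\{u>0\}}\in BV_{\rm loc}(\Omega)$ and that the source term may concentrate on $\{u=0\}$, which forces the characteristic function in \eqref{+ sol 1}. So first I would set $f_n:=f+\tfrac1n$, so that $0<f_n\in L^N(\Omega)$ and $\|f_n\|_{L^N(\Omega)}\le\|f\|_{L^N(\Omega)}+|\Omega|^{1/N}$. By Theorem~\ref{teo princ} there exist $u_n\in BV(\Omega)$, $u_n\ge0$, $D^ju_n=0$, and $z_n\in\mathcal{DM}^{\infty}_{\rm loc}(\Omega)$ with $\|z_n\|_{L^\infty(\Omega)^N}\le1$ solving problem \eqref{problema} with datum $f_n$ in the sense of Definition~\ref{sol}; since $h(0)=\infty$ and $f_n>0$, Remark~\ref{rem post def} gives $u_n>0$ a.e. As the constants of Lemma~\ref{lemma stima} depend on the datum only through $\|f_n\|_{L^N(\Omega)}$, the bounds \eqref{u_n in BV}, \eqref{Gamma_n BV} and the local $L^1$ bound on $h(u_n)f_n$ (cf.\ \eqref{h(u_n)f uniforme}) are uniform in $n$; moreover, using that $(u_n,z_n)$ satisfies the exponential form \eqref{= per Gamma e z} of the equation (a consequence of Definition~\ref{sol} through \eqref{spac Gamma}, \eqref{sol 2} and the chain rule, exactly as for the limit solution) and testing it against $0\le\varphi\in C^1_c(\Omega)$ yields the uniform bound $\int_\Omega h(u_n)f_n\,\ensuremath{\mathrm e}^{-\Gamma(u_n)}\varphi\le\|\nabla\varphi\|_{L^1(\Omega)^N}$. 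Exactly as in Corollary~\ref{cor es} I extract a subsequence with $u_n\to u$ in $L^q(\Omega)$ for $q<\frac{N}{N-1}$ and a.e., $Du_n\rightharpoonup Du$ and $D\Gamma(u_n)\rightharpoonup D\Gamma(u)$ $*$-weakly, $\Gamma(u_n)\to\Gamma(u)$ in $L^q(\Omega)$, and $z_n\rightharpoonup z$ $*$-weakly in $L^\infty(\Omega)^N$ with $\|z\|_{L^\infty(\Omega)^N}\le1$; in particular $u,\Gamma(u)\in BV(\Omega)$. If in addition $\|f\|_{L^N(\Omega)}<(\mathcal S_1h(\infty))^{-1}$, then $\|f_n\|_{L^N(\Omega)}<(\mathcal S_1h(\infty))^{-1}$ for $n$ large and Lemma~\ref{lemma stima} gives $u_n$ uniformly bounded in $L^\infty(\Omega)$, hence $u\in L^\infty(\Omega)$.

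Next I would establish the two structural facts. By the uniform local $L^1$ bound on $h(u_n)f_n$ and Fatou's lemma (using that $h(0)=\infty$ implies $\liminf_n h(u_n)f=+\infty$ on $\{u=0\}\cap\{f>0\}$), one gets $|\{u=0\}\cap\{f>0\}|=0$, i.e.\ $\{u=0\}\subseteq\{f=0\}$ up to a null set, and $h(u)f\in L^1_{\rm loc}(\Omega)$. The main new point is $\chi_{\{u>0\}}\in BV_{\rm loc}(\Omega)$. Fix $0\le\varphi\in C^1_c(\Omega)$ and $0<\sigma\le1$; the function $s\mapsto\tfrac1\sigma T_\sigma(s)=\min\{s/\sigma,1\}$ is nondecreasing with values in $[0,1]$, so testing \eqref{= per Gamma e z} (with datum $f_n$) against $\tfrac1\sigma T_\sigma(u_n)\varphi\in BV(\Omega)\cap L^\infty(\Omega)$ — admissible thanks to Remark~\ref{oss estensione} applied to $z_n\ensuremath{\mathrm e}^{-\Gamma(u_n)}\in\mathcal{DM}^{\infty}_{\rm loc}(\Omega)$ — and using \eqref{GG form}, \eqref{der prod}, $D^ju_n=0$, the chain rule and \eqref{sol 2} (which give $(z_n\ensuremath{\mathrm e}^{-\Gamma(u_n)},\tfrac1\sigma DT_\sigma(u_n))=\tfrac1\sigma\ensuremath{\mathrm e}^{-\Gamma(u_n)}|DT_\sigma(u_n)|$), one obtains
\begin{equation*}
\frac{1}{\sigma}\int_{\Omega}\ensuremath{\mathrm e}^{-\Gamma(u_n)}\varphi\,|DT_\sigma(u_n)| = \int_{\Omega}h(u_n)f_n\,\ensuremath{\mathrm e}^{-\Gamma(u_n)}\tfrac1\sigma T_\sigma(u_n)\,\varphi - \frac{1}{\sigma}\int_{\Omega}T_\sigma(u_n)\,z_n\ensuremath{\mathrm e}^{-\Gamma(u_n)}\cdot\nabla\varphi,
\end{equation*}
whose right-hand side is bounded by $2\|\nabla\varphi\|_{L^1(\Omega)^N}$ because $0\le\tfrac1\sigma T_\sigma\le1$, $\|z_n\ensuremath{\mathrm e}^{-\Gamma(u_n)}\|_{L^\infty(\Omega)^N}\le1$ and $\int_\Omega h(u_n)f_n\ensuremath{\mathrm e}^{-\Gamma(u_n)}\varphi\le\|\nabla\varphi\|_{L^1(\Omega)^N}$. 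Since $DT_\sigma(u_n)$ is concentrated on $\{0<u_n<\sigma\}$, where $\ensuremath{\mathrm e}^{-\Gamma(u_n)}\ge\ensuremath{\mathrm e}^{-\Gamma(1)}$, it follows that $\int_\Omega\varphi\,|D(\tfrac1\sigma T_\sigma(u_n))|\le2\ensuremath{\mathrm e}^{\Gamma(1)}\|\nabla\varphi\|_{L^1(\Omega)^N}$, uniformly in $n$ and in $\sigma\in(0,1]$. Letting first $n\to\infty$ (so $\tfrac1\sigma T_\sigma(u_n)\to\tfrac1\sigma T_\sigma(u)$ in $L^1(\Omega)$) and then $\sigma\to0$ (so $\tfrac1\sigma T_\sigma(u)\to\chi_{\{u>0\}}$ in $L^1(\Omega)$), lower semicontinuity of the total variation (in the spirit of Lemma~\ref{sci BV}) yields $\int_\Omega\varphi\,|D\chi_{\{u>0\}}|\le2\ensuremath{\mathrm e}^{\Gamma(1)}\|\nabla\varphi\|_{L^1(\Omega)^N}$ for all such $\varphi$, hence $\chi_{\{u>0\}}\in BV_{\rm loc}(\Omega)$.

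Then I would pass to the limit in $-\operatorname{div}(z_n\ensuremath{\mathrm e}^{-\Gamma(u_n)})=h(u_n)f_n\ensuremath{\mathrm e}^{-\Gamma(u_n)}$: the left-hand side converges to $-\operatorname{div}(z\,\ensuremath{\mathrm e}^{-\Gamma(u)})$ in $\mathcal D'(\Omega)$ because $z_n\ensuremath{\mathrm e}^{-\Gamma(u_n)}\rightharpoonup z\,\ensuremath{\mathrm e}^{-\Gamma(u)}$ $*$-weakly in $L^\infty(\Omega)^N$ (product of a $*$-weakly convergent bounded sequence with an $L^1$-strongly convergent one). For the right-hand side I would split over $\{u_n>\sigma\}$ and $\{u_n\le\sigma\}$: on $\{u_n>\sigma\}$ one has convergence by dominated convergence (using $h(u_n)\le h_\sigma(\infty)$ and $\{u=0\}\subseteq\{f=0\}$), giving $\int_{\{u>0\}}h(u)f\ensuremath{\mathrm e}^{-\Gamma(u)}\varphi$ in the limit $\sigma\to0$; on $\{u_n\le\sigma\}$ one controls the tail by testing with $V_\sigma(u_n)\ensuremath{\mathrm e}^{-\Gamma(u_n)}\varphi$ exactly as in the proof of \eqref{lim h(u_n)f Gamma_n} in Lemma~\ref{lemma dis}. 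This identifies $-\operatorname{div}(z\,\ensuremath{\mathrm e}^{-\Gamma(u)})$ as a nonnegative measure which coincides with $h(u)f\ensuremath{\mathrm e}^{-\Gamma(u)}$ on $\{u>0\}$; multiplying this equation by $\chi_{\{u>0\}}$ — legitimate now that $\chi_{\{u>0\}}\in BV_{\rm loc}(\Omega)$, so $\chi_{\{u>0\}}z\ensuremath{\mathrm e}^{-\Gamma(u)}\in\mathcal{DM}^{\infty}_{\rm loc}(\Omega)$ and Leibniz \eqref{Leibniz} applies, noting that $D\chi_{\{u>0\}}$ is carried by $\{u^*=0\}$ where $\ensuremath{\mathrm e}^{-\Gamma(u)}=1$ — kills the part living on $\{u=0\}$, and unpacking via \eqref{Leibniz}, the chain rule and $\ensuremath{\mathrm e}^{-\Gamma(u)}>0$ $\mathcal H^{N-1}$-a.e.\ (\eqref{BV finita}) gives $-\chi_{\{u>0\}}^*\operatorname{div}z+\chi_{\{u>0\}}|D\Gamma(u)|=h(u)f$ as measures. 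Using $\chi_{\{u>0\}}|D\Gamma(u)|=g(u)\chi_{\{u>0\}}|Du|$ (\cite[Lemma 2.4]{GOP}, \cite[Remark 2.5]{GOP}), which also gives $\chi_{\{u>0\}}g(u)\in L^1_{\rm loc}(\Omega,|Du|)$, this is precisely \eqref{+ sol 1}. Property \eqref{+ sol 2} follows as in the proof of Theorem~\ref{teo princ}: testing the equation (with datum $f_n$) with $T_k(u_n)\varphi$, passing to the limit (the source tail again vanishing as for \eqref{h(u)f che mi serve}), and using \eqref{+ sol 1} with test function $T_k(u)\varphi$ — here $T_k(u)=0$ on $\{u^*=0\}$, so the $\chi_{\{u>0\}}$-contributions match up — yields $|DT_k(u)|\le(z,DT_k(u))$, and the reverse inequality is \eqref{finitetotal1}. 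Property \eqref{+ sol 3} is obtained from \cite[Lemma 4.9]{BOP} and Fatou's lemma exactly as for \eqref{sol 3}, and $D^ju=0$ follows as in \eqref{catena di dis} (with the factor $\chi_{\{u>0\}}$ inserted and using that $J_u\subseteq\{u^*>0\}$ since $u\ge0$), together with \cite[Lemma 2.3]{BOP} and the fact that $\Gamma$ is increasing. Finally, the $L^\infty$ statement was already obtained above.

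\textbf{Main obstacle.} The delicate steps are the two new ingredients: proving $\chi_{\{u>0\}}\in BV_{\rm loc}(\Omega)$ (which requires the $\tfrac1\sigma T_\sigma$-test together with the double lower-semicontinuity limit above), and correctly handling the possible concentration of the source $h(u_n)f_n\ensuremath{\mathrm e}^{-\Gamma(u_n)}$ on $\{u=0\}$ when passing to the limit — a set that now need not be Lebesgue-negligible and from which $f$ is only a posteriori detached — which is what makes the characteristic function $\chi_{\{u>0\}}$ unavoidable in \eqref{+ sol 1} and forces the Leibniz-rule manipulation with $\chi_{\{u>0\}}\in BV_{\rm loc}(\Omega)$; everything else is a rerun of the arguments of Section~3 with $f_n$ in place of $f$ and $n\to\infty$ in place of the inner regularization.
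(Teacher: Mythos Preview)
Your overall plan---approximate $f$ from above by strictly positive data, apply Theorem~\ref{teo princ}, and pass to the limit---matches the paper's, and your treatment of the a~priori bounds, of $\chi_{\{u>0\}}\in BV_{\rm loc}(\Omega)$, of \eqref{+ sol 2}, \eqref{+ sol 3} and of the $L^\infty$ statement is fine. The gap is in your derivation of \eqref{+ sol 1}. First, the $V_\sigma$--tail estimate from Lemma~\ref{lemma dis} does \emph{not} show that the excess measure $-\operatorname{div}(z\,\ensuremath{\mathrm e}^{-\Gamma(u)})-h(u)f\,\ensuremath{\mathrm e}^{-\Gamma(u)}$ is concentrated on $\{u=0\}$: following that computation you only get $\limsup_n\int_{\{u_n\le\sigma\}}h(u_n)f_n\,\ensuremath{\mathrm e}^{-\Gamma(u_n)}\varphi\le\int_{\{u=0\}}z\cdot\nabla\varphi$, which is not a restriction statement, so all you can legitimately retain (as the paper does) is the one--sided inequality $-\operatorname{div}(z\,\ensuremath{\mathrm e}^{-\Gamma(u)})\ge h(u)f\,\ensuremath{\mathrm e}^{-\Gamma(u)}$. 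Second, even granting your claim, ``unpacking via Leibniz and the chain rule'' cannot yield an equality: from \eqref{Leibniz} one has $-\operatorname{div}(z\,\ensuremath{\mathrm e}^{-\Gamma(u)})=-(\ensuremath{\mathrm e}^{-\Gamma(u)})^{\#}\operatorname{div}z+(z,D(-\ensuremath{\mathrm e}^{-\Gamma(u)})^{\#})$, and a~priori only $(z,D(-\ensuremath{\mathrm e}^{-\Gamma(u)})^{\#})\le|D\ensuremath{\mathrm e}^{-\Gamma(u)}|$; turning this into the needed equality $(z,D(-\ensuremath{\mathrm e}^{-\Gamma(u)})^{\#})=|D\ensuremath{\mathrm e}^{-\Gamma(u)}|$ (and hence $D^ju=0$) is exactly the content of the catena--di--dis sandwich \eqref{catena di dis}, which requires an \emph{upper} bound of the form $-\chi_{\{u>0\}}^{*}\operatorname{div}z+|D\Gamma(u)|\le h(u)f$ that you never establish. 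So your argument is circular at this point.

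The paper obtains that missing upper bound by testing the \emph{original} equation at level $n$ (not the exponential form) with $(1-V_\delta(u_n))\varphi$: since $(z_n,D(1-V_\delta(u_n)))=|D(1-V_\delta(u_n))|$ by \eqref{sol approx 2}, lower semicontinuity in $n$ and then $\delta\to0$ yields $-\operatorname{div}(\chi_{\{u>0\}}z)+|D\chi_{\{u>0\}}|+|D\Gamma(u)|\le h(u)f$, whence (via \eqref{Leibniz} and $(z,D\chi_{\{u>0\}})\le|D\chi_{\{u>0\}}|$) $-\chi_{\{u>0\}}^{*}\operatorname{div}z+|D\Gamma(u)|\le h(u)f$. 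This single test also delivers $\chi_{\{u>0\}}\in BV_{\rm loc}(\Omega)$ for free, making your separate $\tfrac{1}{\sigma}T_\sigma$--argument redundant. With this upper bound and the exponential lower bound in hand, the $\chi_{\{u>0\}}^{*}$--weighted version of \eqref{catena di dis} gives $D^ju=0$, after which the inequalities collapse to the equality \eqref{+ sol 1}. Replacing your ``multiply by $\chi_{\{u>0\}}$ and unpack'' step by this $(1-V_\delta)$--test fixes the proof.
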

\begin{proof}
	We only emphasize the few differences \black with the proof of Theorem \ref{teo princ}.
	\medskip
	\\Let us introduce the following approximation scheme
	\begin{equation}\label{nuovo prob tronc}
		\begin{cases}
			-\Delta_1 u_n  + g(u_n)|Du_n|=h(u_n)f_n & \text{in $\Omega$,}\\
			u_n=0 &\text{on $\partial\Omega$,}
		\end{cases}
	\end{equation}
	where $f_n(x):=\max\{\frac{1}{n}, f(x)\}$ for all $x \in \Omega$.
	Thanks to Theorem \ref{teo princ} there exists $z_n \in \DM_{\rm{loc}}(\Omega)$ and a positive $u_n \in BV(\Omega)$ such that $D^ju_n=0$, $g(u_n) \in L^1(\Omega, |Du_n|)$, $h(u_n)f_n \in L^1_{\rm{loc}}(\Omega)$ such that
	$$-\operatorname{div}z_n+g(u_n)|Du_n|=h(u_n)f_n \quad \text{as measures in $\Omega$,}$$
	$$(z_n,DT_k(u_n))=|DT_k(u_n)| \quad \text{in $\mathcal{D}'(\Omega)$ and for any $k>0$,}$$
	and
	$$u_n=0 \quad \text{$\mathcal{H}^{N-1}-$a.e. on $\partial \Omega$.}$$ 
	Initially, we want to prove \eqref{+ sol 1}, we repeat the same arguments of Lemma \ref{lemma stima} and Corollary \ref{cor es}, it is possible because we can extend the space of test functions to $BV(\Omega) \cap L^\infty(\Omega)$ thanks to Remark \ref{oss estensione}.
	\medskip
	\\After, we take limit as $n$ tends to infinity in the equation \eqref{= per Gamma e z}, which is
	$$-\operatorname{div}\left(z_n \ensuremath{e}^{-\Gamma(u_n)}\right)=h(u_n)f_n \ensuremath{e}^{-\Gamma(u_n)} \quad \text{in $\mathcal{D}'(\Omega)$.}$$
	The first term passes to the limit by the Lebesgue Theorem. For the second term we split the integral into two parts
	$$\int_{\Omega}h(u_n)f_n\ensuremath{\mathrm e}^{-\Gamma(u_n)} \varphi=\int_{\{u_n > \delta\}} h(u_n)f_n\ensuremath{\mathrm e}^{-\Gamma(u_n)} \varphi+ \int_{\{u_n \le \delta\}}h(u_n)f_n\ensuremath{\mathrm e}^{-\Gamma(u_n)}\varphi,$$
	with $\delta \not \in \{\eta : |\{u=\eta\}|>0\}$ which is at most a countable set and $0 \le \varphi \in C^1_c(\Omega)$.
	\\The first quantity passes to the limit as $n$ tends to infinity and after as $\delta$ tends to $0$ applying twice the Lebesgue Theorem, thus we gain
	\begin{equation}\label{lim con f nonne}
		\lim_{\delta \to 0} \lim_{n \to \infty} \int_{\{u_n > \delta\}} h(u_n)f_n\ensuremath{\mathrm e}^{-\Gamma(u_n)} \varphi = \int_{\{u>0\}} h(u)f \ensuremath{\mathrm e}^{-\Gamma(u)} \varphi=\int_\Omega h(u)f \ensuremath{\mathrm e}^{-\Gamma(u)} \varphi,
	\end{equation}
	where the last equality is guaranteed by the fact that $h(u)f \in L^1_{\rm{loc}}(\Omega)$, hence $\{u=0\} \subseteq \{f=0\}$. 
	Instead for the second integral, it holds
	\begin{equation}\label{dis con f nonne}
		0\le \int_{\{u_n \le \delta\}}h(u_n)f_n\ensuremath{\mathrm e}^{-\Gamma(u)}\varphi \quad \text{for every $n \in \mathbb{N}, \delta>0$.}
	\end{equation}
	Hence putting together \eqref{lim con f nonne} and \eqref{dis con f nonne}, we obtain
	\begin{equation}\label{dis per z Gamma f nonne}
		-\operatorname{div}\left(z \ensuremath{\mathrm e}^{-\Gamma(u)}\right) \ge h(u)f \ensuremath{\mathrm e}^{-\Gamma(u)} \quad \text{as measures in $\Omega$.}
	\end{equation}
	
	\medskip
	Now we take $\left(1-V_\delta(u_n)\right)\varphi \in BV(\Omega)\cap L^\infty(\Omega)$ with $0\le \varphi \in C^1_c(\Omega)$ as a test function \black in \eqref{nuovo prob tronc}, it yields \black
	\begin{equation*}
		\int_{\Omega}z_n \cdot \nabla \varphi \left(1-V_\delta(u_n)\right)+\int_\Omega \left(z_n, D(1-V_\delta (u_n))\right)\varphi + \int_\Omega |D\Gamma(u_n)|\left(1-V_\delta(u_n)\right)\varphi=\int_\Omega h(u_n)f_n \left(1-V_\delta(u_n)\right)\varphi,
	\end{equation*} 
	where we want to pass to limit as $n$ tends to infinity and $\delta$ tends to $0$.
	\\The first integral on the left-hand side passes to limit because $z_n \rightharpoonup z$ *$-$weakly in $L^\infty(\Omega)^N$ and $1-V_\delta(u_n) \to \chi_{\{u>0\}}$ strongly in $L^1(\Omega)$ by the Lebesgue Theorem, hence we gain
	\begin{equation}\label{primo}
		\lim_{\delta \to 0}\lim_{n \to \infty} \int_{\Omega}z_n \cdot \nabla \varphi \left(1-V_\delta(u_n)\right) = \int_\Omega z \cdot \nabla \varphi \chi_{\{u>0\}}.
	\end{equation}
	For the second integral on the left-hand side, we recall that
	$$\left(z_n, D(1-V_\delta(u_n))\right)=\left(z_n, D(1-V_\delta(T_{2\delta}(u_n)))\right) \quad \text{as measures in $\Omega$},$$
	using \cite[Lemma 2.3]{GMP} we get
	\begin{equation*}
		\left(z_n, D(1-V_\delta(T_{2\delta}(u_n)))\right)=\left|D\left(1-V_\delta(T_{2\delta}(u_n))\right)\right|=|D(1-V_\delta(u_n))|,
	\end{equation*}
	therefore applying twice Lemma \ref{sci BV}, we can affirm
	\begin{equation}\label{secondo}
		\int_\Omega |D\chi_{\{u>0\}}|\varphi \le \liminf_{\delta \to 0} \liminf_{n \to \infty} \int_\Omega |D(1-V_\delta(u_n))|\varphi,
	\end{equation}
	
	which implies that $\chi_{\{u>0\}} \in BV_{\rm{loc}}(\Omega)$ because $u_n$ are uniformly bounded in $BV(\Omega)$ with respect to $n \in \mathbb{N}$. \black
	Now we analyze the third integral on the left-hand side. Firstly we introduce the function $\hat{\Gamma}_{\delta}:[0, \infty) \to [0, \infty)$ such that
	$$\hat{\Gamma}_{\delta}(s):=\int_0^s g(t)(1-V_\delta(t)) \, \ensuremath{\mathrm d}t,$$
	hence using twice Lemma \ref{sci BV}, it yields \black 
	\begin{equation}\label{terzo}
		\int_\Omega |D\Gamma(u)|\varphi \le \liminf_{\delta \to 0} \liminf_{n \to \infty} \int_\Omega |D\hat{\Gamma}_{\delta}(u_n)| \varphi,
	\end{equation}
	because we highlight that
	$$\int_0^s g(t)\chi_{(0,\infty)}(t) \, \ensuremath{\mathrm d}t=\int_0^s g(t) \, \ensuremath{\mathrm d}t.$$
	Finally, the integral on the right-hand side passes to the limit through the Lebesgue Theorem, it holds
	\begin{equation}\label{destra}
		\lim_{\delta \to 0}\lim_{n \to \infty} \int_\Omega h(u_n)f_n \left(1-V_\delta(u_n)\right)\varphi =\int_\Omega h(u)f\varphi,
	\end{equation}
	where we remember that $h(u)f \in L^1_{\rm{loc}}(\Omega)$ and $\{u=0\} \subseteq \{f=0\}$.
	\\Putting together \eqref{primo}, \eqref{secondo}, \eqref{terzo} and \eqref{destra}, we can conclude that
	\begin{equation}\label{dis preli f non}
		-\operatorname{div}\left(\chi_{\{u>0\}}z\right)+\left|D\chi_{\{u>0\}}\right| +|D\Gamma(u)| \le h(u)f \quad \text{as measures in $\Omega$.}
	\end{equation}
	Since $\|z\|_{L^\infty(\Omega)^N} \le 1$, we observe that
	\begin{equation}\label{z con chi}
		-\chi_{\{u>0\}}^*\operatorname{div}z\stackrel{\eqref{Leibniz}}{=}-\operatorname{div}\left(\chi_{\{u>0\}}z\right)+\left(z, D\chi_{\{u>0\}}\right)\le-\operatorname{div}\left(\chi_{\{u>0\}}z\right)+|D\chi_{\{u>0\}}| \quad \text{as measures in $\Omega$,}
	\end{equation}
	as a consequence putting \eqref{z con chi} into \eqref{dis preli f non}, we can affirm that
	\begin{equation}\label{dis z f nonne quasi sol}
		-\chi_{\{u>0\}}^*\operatorname{div}z+|D\Gamma(u)|\le h(u)f \quad \text{as measures in $\Omega$.}
	\end{equation}
	\medskip
	Now we show that $D^j u=0$. We observe that
	\begin{equation*}
		\begin{aligned}
			(\ensuremath{\mathrm e}^{-\Gamma(u)})^\#\chi_{\{u>0\}}^*|D\Gamma(u)| &\stackrel{\eqref{dis z f nonne quasi sol}}{\le} \ensuremath{\mathrm e}^{-\Gamma(u)} h(u)f \chi_{\{u>0\}}+ (\ensuremath{\mathrm e}^{-\Gamma(u)})^\#  \chi_{\{u>0\}}^* \operatorname{div}z \\ &\stackrel{\eqref{dis per z Gamma f nonne}}{\le} -\operatorname{div}\left(z \ensuremath{\mathrm e}^{-\Gamma(u)} \right) \chi_{\{u>0\}}^*+(\ensuremath{\mathrm e}^{-\Gamma(u)})^\# \chi_{\{u>0\}}^*\operatorname{div}z \\ & \stackrel{\eqref{mis con grat}}{=} \chi_{\{u>0\}}^* \left(z, D\left(-\ensuremath{\mathrm e}^{-\Gamma(u)}\right)^\#\right) \\ & \stackrel{\eqref{dis mis con grat}}{\le} \chi_{\{u>0\}}^*|D\ensuremath{\mathrm e}^{-\Gamma(u)}|\stackrel{\eqref{eq chain}}{=}(\ensuremath{\mathrm e}^{-\Gamma(u)})^\# \chi_{\{u>0\}}^*|D\Gamma(u)|,
		\end{aligned}
	\end{equation*}
	which means that
	\begin{equation}\label{= per no jump}
		\chi_{\{u>0\}}^*\left(z, D(-\ensuremath{\mathrm e}^{-\Gamma(u)})^\#\right)=\chi_{\{u>0\}}^*|D\ensuremath{\mathrm e}^{-\Gamma(u)}| \quad \text{as measures in $\Omega$.}
	\end{equation}
	Now we use a variation of \cite[Lemma 2.3]{BOP}: we observe that $\chi_{\{u>0\}}^*>0$ $\mathcal{H}^{N-1}-$a.e. on $J_{\Gamma(u)}$, hence from \eqref{= per no jump} and \eqref{dis z f nonne quasi sol} we can deduce that $D^j u=0$,
	where we highlight that $\{u=0\}\subseteq \{f=0\}$ because $h(u)f \in L^1_{\rm{loc}}(\Omega)$ and $\Gamma(u)=0$ where $u=0$.
	From now on, we repeat step-by-step the same arguments of Theorem \ref{teo princ}, formulas \eqref{423} involving $\chi_{\{u>0\}}^*$, hence it holds $$-\chi_{\{u>0\}}^*\operatorname{div}z+|D\Gamma(u)|=h(u)f \quad \text{as measures in $\Omega$,}$$
	thus applying \cite[Lemma 2.4]{GOP} we deduce \eqref{+ sol 1}.
	\medskip
	\\Finally, we observe that \eqref{+ sol 2} and \eqref{+ sol 3} follow as in Theorem \ref{teo princ} and this concludes the proof. 
\end{proof}
\section*{Data availability}
No data was used for the research described in the article.

\section*{Acknowledgment}
F. Balducci is partially supported by the Gruppo Nazionale per l’Analisi Matematica, la Probabilità e le loro Applicazioni (GNAMPA) of the Istituto Nazionale di Alta Matematica (INdAM).
\\Finally, we wish to thank the anonymous referee for carefully reading this article and providing some useful comments.

\end{document}